\theoremstyle{plain}
  \newtheorem{thm}{Theorem}[section]
  \newtheorem{cor}[thm]{Corollary}
  \newtheorem{lem}[thm]{Lemma}
\theoremstyle{definition}
  \newtheorem{defn}{Definition}
  \newtheorem{exmp}{Example}
\theoremstyle{remark}
\def\R{\mathbb{R}}
\def\D{\mathcal{D}}
\def\card{\textrm{card}}
\newcommand{\bw}{\mathbf{w}}
\newcommand\hdd{\mbox{\rm dim}_{\rm H}\,} 
\newcommand\lcd{\mbox{\rm dim}_{\rm loc}\,} 
\begin{document}
\title[Multifractal analysis of  a class of self-affine Moran sets ]{ Multifractal analysis of a class of  self-affine Moran sets }
\author{Yifei Gu}
\address{Department of Mathematics, East China Normal University, No. 500, Dongchuan Road, Shanghai 200241, P. R. China}

\email{52275500012@stu.ecnu.edu.cn}

\author{Chuanyan Hou}
\address{College of Mathematics Sciences, Xinjiang Normal University, Urumqi, Xinjiang, 830054, P. R. China}
\email{hchy\_e@163.com}

\author{Jun Jie Miao}
\address{Department of Mathematics, East China Normal University, No. 500, Dongchuan Road, Shanghai 200241, P. R. China}

\email{jjmiao@math.ecnu.edu.cn}


\begin{abstract}

In the paper, we investigate the fine multifractal spectrum of a class of self-affine Moran sets with fixed frequencies, and we prove that under certain separation conditions, the fine multifractal spectrum $H(\alpha)$ is given by the formula
$$
H(\alpha)=\inf_{-\infty<t<+\infty} \{\alpha t+\beta(t)\}.
$$
\end{abstract}

\maketitle

\section{Introduction}
\subsection{Background}
Let $\mu$ be a Borel regular measure on $\R^d$ with $0<\mu(\R^d)<\infty$ and let $B(x,r)$ be a ball at center $x$ with radius $r$. The \textit{local dimension} of $\mu$ at $x$ is given by
$$
\lcd \mu(x) = \lim_{r\to 0} \frac{\log \mu(B(x,r))}{\log r},
$$
provided the limit exists. For $\alpha\geq 0$ we write
$$
E_\alpha=\{x\in \mathbb{R}^d: \dim_{\mathrm{loc}}\mu(x)=\alpha\}.
$$
The \textit{fine multifractal spectrum or singularity
spectrum} of $\mu$ is defined by
\begin{equation}\label{HHE}
H(\alpha)=\hdd E_\alpha,
\end{equation}
where $\hdd$ denotes the Hausdorff dimension. We refer readers to~\cite{Bk_KJF2,Olsen95} for the background reading.

One main question in multifractal analysis is to investigate the fine multifractal spectrum,  the R\'enyi dimensions and their relations~\cite{AttSel21,Bk_KJF2, Feng12,Olsen95}. There has been an enormous interest in finding the fine multifractal spectra of fractal measures,  such as self-similar measures~\cite{ArbPat96,Falco94,FenLa09,OlsSn11}, self-conformal measures~\cite{Patzsc97}, self-affine measures~\cite{FKJ99,FKJ10, King95,Olsen98,Olsen11}, Gibbs measures~\cite{BJMM07, FKJ99} and Moran measures~\cite{Wumin05, WuXiao11}.

The Bedfor-McMullen carpets~\cite{Bedfo84,McMul84} are a class of simplest self-affine fractals which are often used as a testing ground on questions and conjectures of fractals. In~\cite{King95}, King computed the fine multifractal spectrum for self-affine measures supported on Bedford-McMullen carpets, and he proved that the  fine multifractal formula~\eqref{HHE} holds under certain separation condition which was removed by Jordan and Rams in~\cite{JorRam11}. Olsen generalised King's work onto $\R^d$, and he studied the fine multifractal spectrum for the measures supported on self-affine sponges and Random self-affine sponges, see~\cite{Olsen98,Olsen11}.

Recently, in \cite{GM22, GHM23}, the authors come up with a class of new fractals called self-affine Moran sets which are the generalisation of Bedford-McMullen carpets, and they studied the dimension theory of the sets and the properties of the self-affine Moran measures supported on the sets. It is natural to investigate the multifractal analysis on such sets. In the paper, we study the fine multifractal spectrum for the measures supported self-affine Moran sets. First we review the definitions of self-affine Moran sets and self-affine Moran measures. Then, we state our main conclusions on fine multifractal spectrum for self-affine Moran measures in section~\ref{sec_SCMR}. The proofs  are given in section~\ref{sec_pf}.

\subsection{Self-affine Moran sets and measures}\label{sec_SAMS}
Let  $\{(n_k,m_k)\}_{k=1}^\infty$ be a sequence of integer pairs  such that $n_k\geq 2$ and  $m_k\geq 2$. For each integer $k>0$, let $\mathcal{D}_k$ be a subset of $\{0,\dots,n_k-1\}\times\{0,\dots,m_k-1\}$, and we write $r_k=\card(\D_k)$. We always assume that $r_k\geq 2$.

We write
$$
  \Sigma^{k} = \prod_{j=1}^k\mathcal{D}_j  ,\qquad
  \Sigma^{\infty} = \prod_{j=1}^\infty\mathcal{D}_j, \qquad \Sigma^*=\bigcup_{k=0}^\infty\Sigma^k.
$$
For
$\mathbf{w}=w_1\cdots w_k\in\Sigma^k$,
$\mathbf{v}=v_1\cdots v_l\in\Sigma^l$, write $\mathbf{w}\ast\mathbf{v}=
w_1\cdots w_k v_1\cdots v_l\in\Sigma^{k+l}$. We write
$\mathbf{v}|k = (v_1\cdots v_k)$ for the {\it curtailment}
after $k$ terms of $\mathbf{v} = (v_1 v_2\cdots)\in
\Sigma^{\infty}$. We write $\mathbf{w} \preceq \mathbf{v}$ if $\mathbf{w}$ is a
curtailment of $\mathbf{v}$. We call the set $[\mathbf{w}] =
\{\mathbf{v}\in\Sigma^{\infty} : \mathbf{w} \preceq \mathbf{v}\}$ the {\it cylinder}
of $\mathbf{w}$, where $\mathbf{w}\in \Sigma^*$. If $\mathbf{w}=\emptyset$, its cylinder is $[\mathbf{w}]=\Sigma^{\infty}$.

Given $k>0$, for each $w=(i,j)\in \mathcal{D}_k$, we write $\Phi_k=\operatorname{diag}(n_k^{-1},m_k^{-1})$ for the diagonal matrix, and we define an affine transformation on~$\R^2$ by
\begin{equation}\label{eq:Sk}
  \Psi_w(x)=\Phi_k(x+w), \qquad w\in\mathcal{D}_k.
\end{equation}
For each $\mathbf{w}=(w_1w_2\ldots w_k)\in \Sigma^k$, we write $$
\Psi_{\mathbf{w}}=\Psi_{w_1}\circ \Psi_{w_2}\circ \ldots\circ \Psi_{w_k}.
$$

Suppose that $J=[0,1]^2\subset \mathbb{R}^{2}$. For each integer $k>0$, let $\{\Psi_w\}_{w\in\mathcal{D}_k }$ be the self-affine IFS as in~\eqref{eq:Sk}.
For each $\mathbf{w}\in \Sigma^k$, we write $J_{\mathbf{w}}=\Psi_{\mathbf{w}}(J)$, i.e. $J_{\mathbf{w}}$ is a geometrical affine copy to
$J$. Then we call the non-empty compact set
\begin{equation}\label{attractor}
E=\bigcap\nolimits_{k=1}^{\infty}\bigcup\nolimits_{\mathbf{w}\in \Sigma^{k}} J_{\mathbf{w}}
\end{equation}
a \textit{self-affine Moran set or self-affine Moran carpet}, where the elements $J_{\mathbf{w}}$
are called \textit{\ $k$th-level basic sets} of $E$, see~\cite{GM22, GHM23} for details. 


We denote the projection  $\Pi: \Sigma^\infty \rightarrow \mathbb{R}^2$ by
$$
\Pi(\mathbf{w})= \sum_{k=1}^{\infty} \mathrm{diag} \left(\prod_{h=1}^{k} n_h^{-1}, \prod_{h=1}^{k} m_h^{-1}\right)w_k.
$$
Note that the projection $\Pi$ is surjective. It is clear that $E=\Pi(\Sigma^\infty)$, that is,  the self-affine Moran set is the image of the projection $\Pi$.

For each $\delta>0$, let $k=k(\delta)$ be the unique integer satisfying
\begin{equation}\label{def_k}
 \qquad\frac{1}{m_1}\frac{1}{m_2}\ldots \frac{1}{m_k}\leq \delta<\frac{1}{m_1}\frac{1}{m_2}\ldots \frac{1}{m_{k-1}}.
\end{equation}
If no positive integer satisfies above equation, we always write $k=1$. For each integer $k>0$, let $l=l(k)$ be the unique integer satisfying
\begin{equation} \label{def_l}
\frac{1}{n_1}\frac{1}{n_2}\ldots \frac{1}{n_l}\leq \frac{1}{m_1}\frac{1}{m_2}\ldots \frac{1}{m_{k}}<\frac{1}{n_1}\frac{1}{n_2}\ldots \frac{1}{n_{l-1}}.
\end{equation}
We sometimes write $l(\delta)$ for $l(k)$ if $k=k(\delta)$ is given by~\eqref{def_k}. If there is no ambiguity in the context, we just write $l$ instead of $l(k)$ for simplicity.

For each $\delta>0$ and every $\mathbf{w}=w_1w_2\ldots w_n\ldots \in \Sigma^\infty$, where $w_n=(i_n,j_n)$,  we write
$$
U(\delta, \mathbf{w})=\Big\{\mathbf{v}=v_1v_2\ldots v_n \ldots\in\Sigma^\infty:
\begin{array}{ll}
    i_n=i_n',&n=1,\ldots,l(\delta), \\
    j_n=j_n',&n=1,\ldots, k(\delta),
\end{array}
v_n=(i_n',j_n')\Big\},
$$
and we write $\mathcal{U}_\delta$ for the collection of all such sets, i.e.
$$
\mathcal{U}_\delta=\{U(\delta, \mathbf{w}): w\in \Sigma^\infty\}.
$$
We write
\begin{equation}\label{appsquare}
\mathcal{S}_\delta=\{\Pi(U): U\in \mathcal{U}_\delta\}.
\end{equation}
The elements $S$ of $\mathcal{S}_\delta$ are called the \textit{$\delta$-approximate squares}.  Approximate squares are an essential tool in studying self-affine fractals, see~\cite{Baran07, Bedfo84, LalGa92,McMul84}, and   we may also apply  this tool to explore the fine multifractal spectrum.

To study the multifractal spectrum, we need define self-affine Moran measures on $E$.
For each integer $k>0$, let $\big(p_k(w)>0\big)_{w\in \mathcal{D}_k}$ be a probability vector. For each $\mathbf{w}=w_{1} w_2 \cdots w_{k}\in \Sigma^k$, we write
\begin{equation}\label{mu}
   \widetilde{\mu}([\mathbf{w}])=p_{\mathbf{w}}=p_1(w_{1}) p_2(w_2) \cdots p_k(w_{k}).
\end{equation}
Then $\widetilde{\mu}$ is a Borel measure on $\Sigma^\infty$. It is clear that
\begin{equation}\label{projmu}
  \mu (A)=\widetilde{\mu}(\Pi^{-1}A)
\end{equation}
is a Borel probability measure on $E$, and we call it a \textit{self-affine Moran measure} on $E$. For each $k>0$, we write that, for $w=(i,j)\in \mathcal{D}_k$,
$$
q_k(w)=q_k(j)=\sum_{(i,j)\in \mathcal{D}_k} p_k(i,j),\qquad \widehat{q}_k(w)=\widehat{q}_k(i)=\sum_{(i,j)\in \mathcal{D}_k} p_k(i,j).
$$
For each $S(\delta, x)\in \mathcal{S}_\delta$ where $x\in S(\delta,x)\cap E$, there exists a sequence $\mathbf{w}$ such that $\Pi(\mathbf{w})=x$ and $\Pi(U(\delta, x))= S(\delta, x)$. Then
\begin{equation}\label{muas}
  \mu(S(\delta, x))=\left\{
  \begin{array}{lcl}
    p_1(w_1)\ldots p_l(w_l) q_{l+1}(w_{l+1})\ldots q_k(w_k),  & \  & l \leq k, \\
    p_1(w_1)\ldots p_k(w_k)\widehat{q}_{k+1}(w_{k+1}) \cdots \widehat{q}_l(w_l), & \  & l > k.
  \end{array}
  \right.
\end{equation}
The measure distributed on approximate squares is key to study the fine multifractal spectrum in this paper.

\section{Notation and Main Results}\label{sec_SCMR}
\subsection{Self-affine Moran sets with certain frequency}
In~\cite{GM22,GHM23}, the authors study the dimension theory of self-affine Moran sets and measures with assumption
\begin{equation}\label{UB}
  N^+=\sup\{n_k, m_k: k=1,2,\ldots\}<\infty.
\end{equation}
Since $N^+<\infty$, the patterns in the given sequence $\{(n_k,m_k,\mathcal{D}_k)\}_{k=1}^\infty$ are actually finite, and we write  $\Gamma$ for the collection of  these finite patterns. Therefore,   for all integers $k>0$,
$$
 (n_k,m_k,\mathcal{D}_k)\in \Gamma,
$$
and $\operatorname{card} \Gamma <\infty$. Note that if we impose a Bernoulli measure on $\Gamma^\infty$, by Ergodic theory, for almost all sequences in $\Gamma^\infty$, every element in $\Gamma$ appearing in the sequence has fixed frequency.  Therefore, we investigate the fine multifractal spectrum of self-affine Moran sets with fixed frequencies.

Suppose that  for each pattern $\gamma=(n_\gamma,m_\gamma,\mathcal{D}_\gamma)\in \Gamma$, the limit
\begin{equation}\label{freq}
  \lim_{n\to\infty}\frac{\operatorname{card}\{k: (n_k,m_k,\mathcal{D}_k)=\gamma, k=1,2,\ldots, n \}}{n}
\end{equation}
exists, denoted by $f_\gamma$, and $\sum_{\gamma\in\Gamma} f_\gamma=1$. We call  $E$ the \textit{self-affine Moran set  with frequency $\mathbf{f}=\{f_\gamma\}_{\gamma\in \Gamma}$.}
Let $\mu$ be the self-affine Moran measure given by \eqref{projmu}. It is clear that for the given sequence $\{\big(p_k(w)>0\big)_{w\in \mathcal{D}_k}\}_{k=1}^\infty$ of probability vectors, for each $k>0$, there exists $\gamma\in \Gamma$ such that $\big(p_k(w)\big)_{w\in \mathcal{D}_k}=\big(p_\gamma(w)\big)_{w\in \mathcal{D}_\gamma}$, and the frequency of $\big(p_\gamma(w)\big)_{w\in \mathcal{D}_\gamma}$ appearing in the sequence is also $f_\gamma$. From now on, both notations are used in the context for simplicity.

For the given probability vector $\mathbf{f}=\{f_\gamma\}_{\gamma\in\Gamma}$, we write
\begin{equation}\label{def_zeta}
\zeta:=\frac{\sum_{\gamma\in\Gamma} f_\gamma \log m_\gamma}{\sum_{\gamma\in\Gamma} f_\gamma \log n_\gamma}.
\end{equation}
For each $\gamma\in\Gamma$,  let $\big(p_\gamma(w)>0\big)_{w\in \mathcal{D}_\gamma}$ be a probability vector, and we define $\beta_\gamma(t)$ to be the unique solution to
\begin{eqnarray}\label{begat}
  m_\gamma^{-\beta_\gamma(t)}\sum_{(i,j)\in \D_\gamma}p^t_{\gamma}(ij) u_{\gamma}^{1-\zeta}(j)&=&1,  \qquad \textit{for $\zeta\leq 1$; }  \\
  n_\gamma^{-\beta_\gamma(t)}\sum_{(i,j)\in \D_\gamma}p^t_{\gamma}(ij) \widehat{u}_{\gamma}^{1-\zeta}(i)&=&1,\qquad \textit{for $\zeta > 1$, }  \nonumber
\end{eqnarray}
where $u_{\gamma}(j)=\frac{q_{\gamma}^t(j)}{\sum_{(i,j)\in \D_\gamma} p_{\gamma}^t(i,j)}$
 and {\color{red} $ \widehat{u}_{\gamma}(i)=\frac{\widehat{q}_{\gamma}^t(i)}{\sum_{(i,j)\in \D_\gamma} p_{\gamma}^t(i,j)}$}, and we write
\begin{equation}\label{betat}
  \beta(t)=\left\{ \begin{array}{ll} \frac{\sum_{\gamma\in \Gamma}f_\gamma\beta_\gamma(t) \log m_\gamma}{\sum_{\gamma\in\Gamma}f_\gamma \log m_\gamma} &  \textit{  for }\zeta \leq  1,\\
   \frac{\sum_{\gamma\in \Gamma}f_\gamma\beta_\gamma(t) \log n_\gamma}{\sum_{\gamma\in\Gamma}f_\gamma \log n_\gamma} &  \textit{  for }\zeta> 1.
   \end{array} \right.
\end{equation}

To study the fine multifractal spectrum, the following geometric separation conditions play important roles in the proof , which are also frequently used to study the geometric and topological properties of self-affine sets with grid structures, see~\cite{HM, LMR22}.

We say $E$ satisfies \textit{Row separation condition} (RSC) if for all $\gamma\in \Gamma$ and  for all $(i,j),(i',j')\in D_\gamma$, we have $\Psi_{i,j}(J) \cap \Psi_{i',j'}(J)=\emptyset$ and $|j-j'|\neq 1$.
We say $E $ satisfies \textit{Top or bottom separation condition} (TBSC) if there exists a pattern $\gamma\in \Gamma$ with the frequency $f_\gamma>0$ such that at least one of the following conditions holds:
\begin{itemize}
  \item[(1).]
 For all $(i,j)\in D_\gamma$, $j \neq 0$.
  \item[(2).]
 For all $(i,j)\in D_\gamma$, $j \neq m_\gamma-1$.
\end{itemize}
Note that RSC and TBSC are crucial to the proof of the fine multifractal spectrum for the case $\zeta \leq 1$.  Similarly, we may define column separation condition (CSC) and left or right separation condition (LRSC), which are  crucial for the case $\zeta>1$. More details about these conditions  are discussed in Example~\ref{eg}.   In fact, for  $\zeta>1$, the conclusion follows just by exchanging the roles of $x$ and $y$ axes. Therefore  we always assume that $\zeta \leq 1$ throughout the paper and omit the proof for $\zeta >1$.

Let
\begin{eqnarray*}
\alpha_{\min}&=&\frac{\sum_{\gamma\in\Gamma} f_\gamma \min_{w\in \D_\gamma}\big\{-\zeta\log p_{\gamma}(w) - (1-\zeta)\log q_{\gamma}(w)\big\}}{\sum_{\gamma\in\Gamma}f_\gamma \log m_\gamma},
\\
\alpha_{\max}&=&\frac{\sum_{\gamma\in\Gamma} f_\gamma \max_{w\in \D_\gamma} \big\{-\zeta\log p_{\gamma}(w) - (1-\zeta)\log q_{\gamma}(w)\big\}}{\sum_{\gamma\in\Gamma}f_\gamma \log m_\gamma}.
\end{eqnarray*}
The interval $(\alpha_{\mathrm{min}},\alpha_{\mathrm{max}})$ gives the proper domain that $H(\alpha)$ is valid.

\subsection{Main conclusions}

We have the following conclusions for self-affine Moran measures.
\begin{thm}\label{thm_mfa}
Let $E$ be the self-affine Moran set satisfying either RSC or TBSC with the frequency $\mathbf{f}=\{f_\gamma\}_{\gamma\in\Gamma}$ and $\zeta\leq 1$. Let $\mu$ be the self-affine Moran measure given by \eqref{projmu} with $\big(p_\gamma(w)>0\big)_{w\in \mathcal{D}_\gamma}$. Then for every $\alpha\in (\alpha_{\mathrm{min}},\alpha_{\mathrm{max}})$, we have that
$$
H(\alpha)=\inf_t \{\alpha t+\beta(t)\}.
$$
Furthermore, $H(\alpha)$ is differentiable with respect to $\alpha$ and is concave.
\end{thm}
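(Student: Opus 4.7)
The plan is to establish the Legendre--Fenchel formula $H(\alpha) = \inf_t\{\alpha t + \beta(t)\}$ by proving matching upper and lower bounds, then read off differentiability and concavity of $H$ from the corresponding properties of $\beta$. As a preliminary step I would verify via the implicit function theorem applied to \eqref{begat} that each $\beta_\gamma$ is real-analytic, strictly decreasing and strictly convex in $t$, so that the frequency-weighted average $\beta$ in \eqref{betat} inherits these properties. Consequently $t\mapsto -\beta'(t)$ is a bijection from $\mathbb{R}$ onto $(\alpha_{\min},\alpha_{\max})$, and it suffices to prove $H(\alpha(t)) = \alpha(t)\,t + \beta(t)$ with $\alpha(t):=-\beta'(t)$ together with the two-sided envelope inequalities for general $\alpha$.

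For the upper bound I would cover by approximate squares. Fix $t\in\mathbb{R}$ and $\epsilon>0$, and split $E_\alpha$ into countably many pieces $E_{\alpha,N}$ on which the inequalities $\delta^{\alpha+\epsilon}\leq\mu(S(\delta,x))\leq\delta^{\alpha-\epsilon}$ hold for all $\delta\leq 1/N$. Covering $E_{\alpha,N}$ by $\delta$-approximate squares and writing $(\mathrm{diam}\,S)^{\alpha t+\beta(t)}\leq\mu(S)^{-t}(\mathrm{diam}\,S)^{\alpha t}\cdot\mu(S)^t(\mathrm{diam}\,S)^{\beta(t)}$ reduces the estimate to bounding $\sum_{S\in\mathcal{S}_\delta}\mu(S)^t(\mathrm{diam}\,S)^{\beta(t)}$. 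Using the explicit formula \eqref{muas} and summing over all admissible digits, the frequency assumption \eqref{freq} converts the resulting product, level by level, into $\prod_\gamma\bigl(m_\gamma^{\beta_\gamma(t)}\bigr)^{-(\text{count of pattern }\gamma)}\cdot\prod_\gamma\bigl(\sum p_\gamma^t u_\gamma^{1-\zeta}\bigr)^{(\text{count of pattern }\gamma)}$, which by the definitions \eqref{begat} and \eqref{betat} of $\beta_\gamma$ and $\beta$ collapses, up to subexponential factors, to a constant. Letting $\epsilon\to 0$ yields the upper bound for every $\alpha$.

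For the lower bound I would construct, for each $t\in\mathbb{R}$, an auxiliary Bernoulli-type measure $\widetilde{\nu}_t$ on $\Sigma^\infty$ by assigning to each digit $w_h=(i_h,j_h)\in\mathcal{D}_h$ the weight
$$\widetilde{\nu}_t([w_1\cdots w_k])=\prod_{h=1}^{k}m_h^{-\beta_h(t)}\,p_h^t(w_h)\,u_h^{1-\zeta}(j_h),$$
which sums to $1$ across each level thanks to \eqref{begat}. Applying the strong law of large numbers along the frequency $\mathbf{f}$ to the logarithms of both $\mu(S(\delta,\Pi\mathbf{w}))$ and $\widetilde{\nu}_t(U(\delta,\mathbf{w}))$, I obtain for $\widetilde{\nu}_t$-a.e. $\mathbf{w}$ the two limits
$$\lim_{\delta\to 0}\frac{\log\mu(S(\delta,\Pi\mathbf{w}))}{\log\delta}=\alpha(t),\qquad \lim_{\delta\to 0}\frac{\log\widetilde{\nu}_t(U(\delta,\mathbf{w}))}{\log\delta}=\alpha(t)\,t+\beta(t).$$
Thus $\widetilde{\nu}_t\circ\Pi^{-1}$ is supported on $E_{\alpha(t)}$, and combined with the second limit via Billingsley's lemma applied to approximate squares (in the spirit of~\cite{King95,Olsen98}) this will give $H(\alpha(t))\geq \alpha(t)\,t+\beta(t)$, matching the upper bound.

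The main obstacle is the transfer from symbolic estimates on $\widetilde{\nu}_t$ to geometric estimates on $\widetilde{\nu}_t\circ\Pi^{-1}$, and this is exactly where the hypothesis RSC or TBSC enters. Under RSC, rows in every pattern are separated in the horizontal strip $J$, so each geometric approximate square $S\in\mathcal{S}_\delta$ is the image of only a uniformly bounded number of symbolic blocks $U\in\mathcal{U}_\delta$ under $\Pi$; hence $\widetilde{\nu}_t\circ\Pi^{-1}(S)\asymp\widetilde{\nu}_t(U)$ up to a universal multiplicative constant, and Billingsley's lemma applies directly. Under TBSC the same conclusion is obtained by a pigeonhole argument exploiting the infinite recurrence of the pattern $\gamma$ with the forbidden top or bottom row (whose frequency is strictly positive), giving only a subexponential multiplicative loss that is absorbed in the logarithmic dimension limit. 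Once the symbolic-to-geometric comparison is secured, the concavity and differentiability of $H$ on $(\alpha_{\min},\alpha_{\max})$ follow immediately from the strict convexity and analyticity of $\beta$ via standard Legendre duality, completing the proof.
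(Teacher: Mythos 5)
Your overall architecture (auxiliary measure for the lower bound, covering by approximate squares for the upper bound, Legendre duality for the regularity of $H$) matches the paper, and your lower bound is essentially the paper's: your $\widetilde{\nu}_t$ is exactly the measure $\widetilde{\mu}_t$ of \eqref{defmut}, and the SLLN/Billingsley computation is carried out in Lemmas~\ref{lb}--\ref{mut}. However, you have placed the separation hypotheses in the wrong half of the argument, and this creates a genuine gap. In the paper the lower bound needs \emph{no} separation condition: the ball-to-approximate-square comparison for $\mu_t$-a.e.\ point is obtained by a Borel--Cantelli argument (the sets $A_k^c$ in Lemma~\ref{lem_ld} have summable $\mu_t$-measure), so the issue you raise about $\Pi$ identifying many symbolic blocks is handled unconditionally there.

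The real problem is your claim that $\sum_{S\in\mathcal{S}_\delta}\mu(S)^t(\mathrm{diam}\,S)^{\beta(t)}$ ``collapses, up to subexponential factors, to a constant'' by the definitions \eqref{begat} and \eqref{betat}. It does not. By \eqref{muas}, $\mu(S_k(\mathbf{w}))^t=\bigl(p_1(w_1)\cdots p_l(w_l)\bigr)^t\bigl(q_{l+1}(j_{l+1})\cdots q_k(j_k)\bigr)^t$, whereas $m_1^{\beta_1(t)}\cdots m_k^{\beta_k(t)}$ is the total mass of the weights $p_i^t(w)\,u_i^{1-\zeta}(j)$; converting one into the other produces the factor $\bigl(u_1(j_1)\cdots u_k(j_k)\bigr)^{\zeta}/\bigl(u_1(j_1)\cdots u_l(j_l)\bigr)$, which is controlled only when $D_k(\mathbf{w})=I_l(\mathbf{w})-I_k(\mathbf{w})\geq-\epsilon$ (this is the content of Lemma~\ref{lem_estmu}) and can be exponentially large otherwise. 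So the sum over \emph{all} approximate squares is not bounded by $(m_1\cdots m_k)^{\beta(t)}$ times a subexponential factor, and your covering argument breaks. This is precisely where RSC/TBSC must enter: one has to show that every $x\in E_\alpha$ is, along a sequence of scales $k_i$, captured by approximate squares that share its row-coordinates and satisfy $D_{k_i}\geq-\epsilon$ --- the paper's replica condition (Definition~\ref{defRC} and Lemma~\ref{cover}) --- so that the cover can be drawn from the restricted family $G(\epsilon,k)$, to which Lemma~\ref{lem_estmu} applies. Deriving the replica condition from RSC, and from TBSC via the positive frequency of the pattern with an empty top or bottom row, is the substance of the paper's proof of this theorem; your proposal omits this step entirely, so the upper bound is unproved.
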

Note that (1) For $\zeta >1$, by replacing RSC and TBSC by CSC and LRSC, the fine multifractal spectrum formula is  still valid.  (2)  RSC and TBSC are  required for the proof of upper bound.  Actually, we prove the upper bound holds in a more general case, see Theorem~\ref{thmub} , where RSC and TBSC are replaced by the replica condition  in Section~\ref{sec_ub}. It would be very interesting if one may prove the conclusion without the replica condition.

For each $\gamma\in \Gamma$, we write that
\begin{eqnarray*}
  r_\gamma(j)&=&\operatorname{card}\{i\colon (i,j)\in \mathcal{D}_\gamma \textrm{ for each } j\}, \\
  \widehat{r}_\gamma (i)&=&\operatorname{card}\{j\colon (i,j)\in \mathcal{D}_\gamma \textrm{ for each } i\},
\end{eqnarray*}
Instead of using these geometric separation conditions, we may alternatively make some assumptions on $r_\gamma$ and $p_\gamma$, and the fine multifractal formula  is still valid.

\begin{thm}\label{cor2}
Let $E$ be the self-affine Moran set with the frequency $\mathbf{f}=\{f_\gamma\}_{\gamma\in\Gamma}$ and $\zeta\leq 1$. Let $\mu$ be the self-affine Moran measure given by \eqref{projmu} with $\big(p_\gamma(w)>0\big)_{w\in \mathcal{D}_\gamma}$.  Suppose that, for all $\gamma\neq \gamma'\in \Gamma$,
\begin{itemize}
  \item[(1).]
 $r_{\gamma}(0)=r_{\gamma'}(0)$, and $r_{\gamma}(m_{\gamma}-1)=r_{\gamma'}(m_{\gamma'}-1);$
  \item[(2).]
 $\{p_{\gamma'}(i,0)\}_{(i,0)\in \mathcal{D}_{\gamma'}}$ and $\{p_{\gamma'}(i,m_{\gamma'}-1)\}_{(i,m_{\gamma'}-1)\in \mathcal{D}_{\gamma'}}$ are  the permutations of $\{p_{\gamma}(i,0)\}_{(i,0)\in \mathcal{D}_{\gamma}}$ and $\{p_{\gamma}(i,m_{\gamma}-1)\}_{(i,m_{\gamma}-1)\in \mathcal{D}_{\gamma}}$, respectively.
\end{itemize}
Then for every $\alpha\in (\alpha_{\mathrm{min}},\alpha_{\mathrm{max}})$, we have that
$$
H(\alpha)=\inf_t \{\alpha t+\beta(t)\}.
$$
\end{thm}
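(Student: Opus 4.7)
The plan is to reduce Theorem~\ref{cor2} to Theorem~\ref{thm_mfa} by exploiting the remark that the separation conditions RSC and TBSC are used only in the proof of the upper bound of the spectrum. The lower bound $H(\alpha)\geq \inf_t\{\alpha t+\beta(t)\}$ in Theorem~\ref{thm_mfa} is established by constructing an auxiliary Gibbs-type measure on $\Sigma^\infty$ with prescribed local dimension, pushing it forward by $\Pi$, and applying Billingsley's lemma on approximate squares; none of these steps rely on RSC/TBSC. I would therefore transfer this lower bound verbatim under the hypotheses of Theorem~\ref{cor2}.

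For the upper bound, the point of RSC/TBSC in the proof of Theorem~\ref{thm_mfa} is to guarantee that each ball $B(x,\delta)$ meets essentially a single $\delta$-approximate square, so that $\mu(B(x,\delta))$ can be compared to $\mu(S(\delta,x))$ given by formula~\eqref{muas}. Without these conditions, the $k$-level basic sets may touch along horizontal boundaries, and a ball may intersect a bounded number of adjacent squares in $\mathcal{S}_\delta$. Such a neighboring square differs from $S(\delta,x)$ by replacing the $y$-coordinate at some intermediate level $n$, with $l<n\leq k$, by $0$ or $m_\gamma-1$ for the pattern $\gamma$ active at that level. The ``extra'' factor appearing in the measure of the neighbor is therefore of the form $q_\gamma(0)$ or $q_\gamma(m_\gamma-1)$ at the level in question, together with a reshuffling inside the top or bottom row.

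The role of conditions (1) and (2) is to make precisely these top- and bottom-row contributions interchangeable across patterns: (1) fixes the cardinalities $r_\gamma(0)$ and $r_\gamma(m_\gamma-1)$, so the number of summands is the same for every $\gamma$, while (2) guarantees that the \emph{multisets} $\{p_\gamma(i,0)\}$ and $\{p_\gamma(i,m_\gamma-1)\}$ are preserved under $\gamma\mapsto\gamma'$. Summing over $(i,0)\in\mathcal{D}_\gamma$ and $(i,m_\gamma-1)\in\mathcal{D}_\gamma$ then yields values of $q_\gamma(0)$ and $q_\gamma(m_\gamma-1)$ that depend only on the multiset and not on $\gamma$, and a telescoping argument along the sequence of patterns at levels $l+1,\dots,k$ produces a uniform constant $C>0$ such that
$$
\mu(B(x,\delta))\leq C\,\mu(S(\delta,x)) \qquad \text{for all } x\in E,\ \delta>0.
$$
With this comparison in hand, the covering argument from Theorem~\ref{thm_mfa} (covering $E_\alpha$ by approximate squares and applying the Legendre transform of $\beta(t)$) goes through unchanged, yielding the upper bound $H(\alpha)\leq \inf_t\{\alpha t+\beta(t)\}$.

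The main obstacle will be the combinatorial bookkeeping behind the inequality above: one must verify that, as the active pattern changes from level to level along $\mathbf{w}$, the relabeling of the top/bottom-row probabilities provided by condition (2) is compatible with the product structure of $\mu$ on approximate squares, and that the induced permutations compose to give a bounded multiplicative error rather than accumulating with $k$. Once this is settled, Theorem~\ref{cor2} follows by combining the transferred lower bound with the new upper bound and invoking the differentiability and concavity already established for $\beta(t)$ in the proof of Theorem~\ref{thm_mfa}.
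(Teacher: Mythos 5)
There is a genuine gap in the upper bound. Your plan rests on establishing a uniform comparison $\mu(B(x,\delta))\leq C\,\mu(S(\delta,x))$ for all $x$ and $\delta$, but hypotheses (1)--(2) are not strong enough for this. If $x$ lies very close to the top edge of its approximate square, say $j_{k+1}=\dots=j_{k+V}=m_{k+1}-1,\dots,m_{k+V}-1$ for a long run of length $V$, then $B(x,c_{k+V})$ meets the approximate square directly above, whose $j$-word is obtained from $j_1\dots j_{k+V}$ by adding $1$ with carries: one digit is incremented and the tail becomes a run of $0$'s. The ratio of the two measures then contains a factor $\prod_{h=k+1}^{k+V} q_h(0)/q_h(m_h-1)$. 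Conditions (1)--(2) make $q_\gamma(0)$ and $q_\gamma(m_\gamma-1)$ pattern-independent, but they do not make them equal to each other, so this factor grows or decays exponentially in $V$, and $V$ is unbounded for $x\notin\partial E$. No telescoping of the row permutations can rescue a uniform constant $C$; the comparison you want is simply false in general. Moreover, your sketch never addresses the second half of the replica condition, the requirement $D_{k_i}(\mathbf{w})\geq-\epsilon$, which is what makes the covering estimate in Lemma~\ref{lem_estmu} work (it is needed to dominate $u_{l+1}(w'_{l+1})\cdots u_k(w'_k)$ by $e^{2\epsilon k}(u_1(w'_1)\cdots u_k(w'_k))^{1-\zeta}$); without it the Legendre-transform covering argument does not close.

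The paper's actual route is different: it does not compare $\mu(B(x,\delta))$ with $\mu(S(\delta,x))$ at every scale, but verifies the replica condition by producing a sparse sequence of \emph{good} scales $k_i$ at which (a) the point is not too close to a horizontal boundary (encoded by $V_{k_i}(\mathbf{w})=0$), so that $B(x,c_{k_i})\cap E$ really is covered by squares sharing the first $k_i$ digits $j_1,\dots,j_{k_i}$, and (b) $D_{k_i}(\mathbf{w})>-\epsilon$. Hypotheses (1)--(2) are used precisely to show that $u_{k'}(w_{k'})$ is constant along a boundary run, which yields the identity $D_{k+V_k}(\mathbf{w})=\frac{(1-\zeta_k)k}{\zeta_k(k+V_k)}\bigl(I_k(\mathbf{w})-\log u_{k+1}\bigr)$ and, via a contradiction argument (the averages $I_{n_j}(\mathbf{w})$ would have to decrease by $\epsilon/2$ infinitely often while remaining bounded), proves that good scales exist. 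Your transfer of the lower bound is fine --- Theorem~\ref{thmlb} indeed uses no separation hypothesis --- but the upper bound needs to be rebuilt along these lines rather than through a pointwise measure comparison.
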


The fine multifractal spectrum of Bedford-McMullen set is a direct consequence of Theorem~\ref{cor2}, which was first studied by King~\cite{King95}, and improved by Jordan and Rams~\cite{ JorRam11} .
\begin{cor}
Let $\mu$ be a self-affine measure supported on a Bedford-McMullen set $E$. Then for every  $\alpha\in (\alpha_{\mathrm{min}},\alpha_{\mathrm{max}})$, we have that
$$
H(\alpha)=\inf_t \{\alpha t+\beta(t)\}.
$$
\end{cor}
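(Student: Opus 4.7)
The plan is to recognise the Bedford-McMullen carpet as the degenerate case of a self-affine Moran set where the defining data $(n_k,m_k,\mathcal{D}_k)$ is constant in $k$, and then to derive the corollary as an immediate consequence of Theorem~\ref{cor2}.

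First I would fix a Bedford-McMullen carpet $E$ determined by integers $n,m\geq 2$, a digit set $\mathcal{D}\subset\{0,\dots,n-1\}\times\{0,\dots,m-1\}$, and a probability vector $(p(w))_{w\in\mathcal{D}}$. Setting $(n_k,m_k,\mathcal{D}_k)=(n,m,\mathcal{D})$ and $(p_k(w))=(p(w))$ for every $k\geq 1$, the construction in~\eqref{attractor} returns precisely $E$, and the self-affine Moran measure given by~\eqref{projmu} is exactly the Bernoulli product measure usually attached to this carpet. In this situation the pattern set $\Gamma$ is a singleton $\{\gamma\}$ with $\gamma=(n,m,\mathcal{D})$, the frequency satisfies $f_\gamma=1$ trivially, and the ratio in~\eqref{def_zeta} reduces to $\zeta=\log m/\log n$. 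Both $\beta_\gamma(t)$ from~\eqref{begat} and $\beta(t)$ from~\eqref{betat} then collapse to the classical Bedford-McMullen expressions.

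Next I would verify the hypotheses of Theorem~\ref{cor2}. Conditions (1) and (2) there quantify over pairs $\gamma\neq\gamma'\in\Gamma$; since $\Gamma$ has a single element, both conditions hold vacuously. Assuming first that $\zeta\leq 1$ (equivalently $m\leq n$), Theorem~\ref{cor2} applies directly and yields $H(\alpha)=\inf_t\{\alpha t+\beta(t)\}$ for every $\alpha\in(\alpha_{\min},\alpha_{\max})$, which is the conclusion of the corollary in this case.

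It remains to deal with $\zeta>1$, that is $m>n$. As observed in the remark following Theorem~\ref{thm_mfa}, this case is reduced to the previous one by interchanging the two coordinate axes: the reflected carpet is again a Bedford-McMullen carpet but with parameters $(m,n)$ and the mirrored digit set, so its ratio becomes $\zeta^{-1}<1$, and the symmetric analogues of~\eqref{begat}--\eqref{betat} apply. Since $H(\alpha)$ and the Legendre transform of $\beta$ are invariant under this coordinate swap, the formula follows. No step presents a real obstacle; the content of the corollary is really just the observation that Bedford-McMullen carpets sit inside the framework of Theorem~\ref{cor2} in a way that makes the separating hypotheses (1) and (2) vacuous.
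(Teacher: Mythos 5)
Your proposal is correct and matches the paper's intent exactly: the paper presents this corollary as a direct consequence of Theorem~\ref{cor2}, obtained by viewing the Bedford--McMullen carpet as the constant-pattern case where $\Gamma$ is a singleton, so that hypotheses (1) and (2) hold vacuously, with $\zeta>1$ handled by exchanging the axes as noted in the remark after Theorem~\ref{thm_mfa}. No further comment is needed.
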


Finally, we give an example to illustrate the separations conditions.
\begin{exmp}\label{eg}
\begin{figure}[h]
\centering
\includegraphics[width=\textwidth]{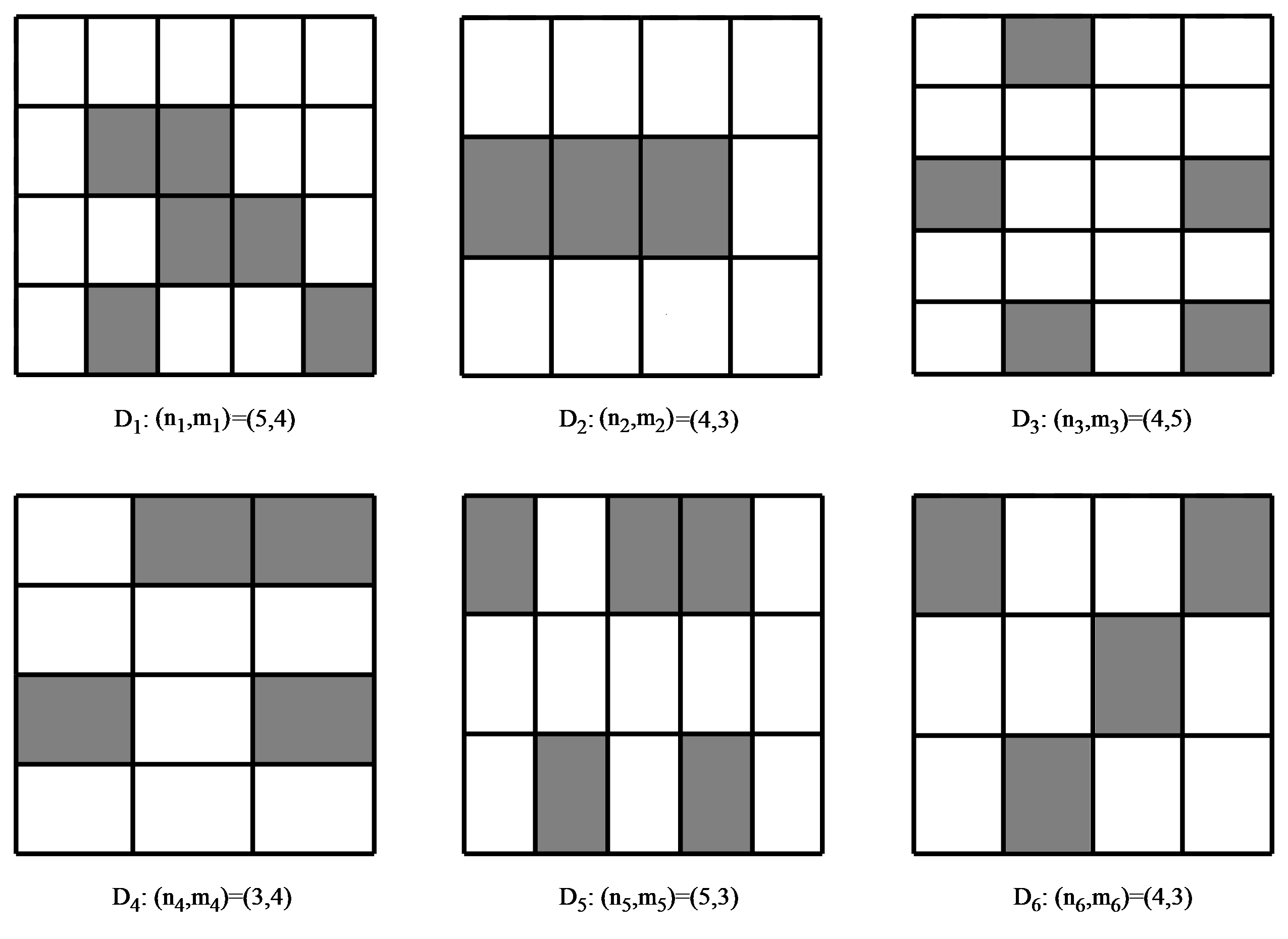}
\caption{}\label{fig_cdt}
\end{figure}
Let $(n_1,m_1)=(5,4)$, $(n_2,m_2)=(4,3)$, $(n_3,m_3)=(4,5)$, $(n_4,m_4)=(3,4)$, $(n_5,m_5)=(5,3)$, {\color{red} $(n_6,m_6)=(4,3)$} and
\begin{eqnarray*}
\D_1&=&\{(1,0),(1,2),(2,1),(2,2),(3,1),(4,0)\},  \quad \
\D_2=\{(0,1),(1,1),(2,1)\}, \\
\D_3&=&\{(0,2),(1,0),(1,4),(3,0),(3,2)\}, \qquad\qquad
\D_4=\{(0,1),(1,3),(2,1),(2,3)\}, \\
\D_5&=&\{(0,2),(1,0),(2,2),(3,0),(3,2)\},\qquad\qquad
\D_6=\{(0,2),(1,0),(2,1),(3,2)\},
\end{eqnarray*}
see Figure~\ref{fig_cdt}.

(1). Let $\Gamma=\{\D_1,\D_3,\D_6\}$,  and   $f_1=\frac{1}{4},f_2=\frac{1}{2},f_4=\frac{1}{4}$.  Then by \eqref{def_zeta},
$$
\zeta= \frac{0.25\log 4+0.5\log 5 + 0.25\log 3}{0.25\log 5+0.5\log 4 +0.25\log 4}\approx 0.9888<1.
$$
Since the top row of $\D_1$ is empty and $f_1>0$, $E$ satisfies top or bottom separation condition.

If we set $f_1=\frac{1}{10},f_2=\frac{4}{5},f_4=\frac{1}{10}$, we have {\color{red} $\zeta\approx 1.090>1$}, and $E$ satisfies left or right separation condition.

(2) Suppose that $\Gamma=\{\D_2,\D_3,\D_5\}$,  and   $f_1=\frac{1}{4},f_2=\frac{1}{2},f_4=\frac{1}{4}$.  Then $\zeta\approx 0.9767<1,$
and  $E$ satisfies both RSC and TBSC.

(3) Suppose that $\Gamma=\{\D_3,\D_4,\D_5\}$,  and $f_3=\frac{1}{5},f_4=\frac{1}{5},f_5=\frac{3}{5}$. Then $\zeta\approx 0.860 <1,$
and $E$ satisfies Row separation condition.  We also compute its fine multifractal spectrum as a simple example to illustrate our main conclusion.  Let $\beta_3(t)$, $\beta_4(t)$ and $\beta_5(t)$ be the solutions to
\begin{eqnarray*}
  5^{-\beta_3(t)}\sum_{(i,j)\in \D_3}p^t_3(ij) u_3^{1-\zeta}(j)=1, && \qquad 4^{-\beta_4(t)}\sum_{(i,j)\in \D_4}p^t_4(ij) u_4^{1-\zeta}(j)=1, \\
  3^{-\beta_5(t)}\sum_{(i,j)\in \D_5}p^t_5(ij) u_5^{1-\zeta}(j)=1. &&
\end{eqnarray*}
By \eqref{betat}, we have that
$$
\beta(t)=\frac{\beta_3(t)\log 5+2\beta_4(t)\log 2+3\beta_5(t)\log 3}{2\log 2+3\log 3+\log 5}
$$
By Theorem~\ref{thm_mfa}, for every $\alpha\in (\alpha_{\mathrm{min}},\alpha_{\mathrm{max}})$, the fine multifractal spectrum is given by $H(\alpha)=\inf_t \{\alpha t+\beta(t)\}.$

\end{exmp}

\section{Multifractal spectrum of self-affine Moran measures}\label{sec_pf}
In this section, we investigate the fine multifractal spectrum of self-affine Moran measures. The proof is divided into two parts, one is to show that $ \inf_t\{t\alpha+\beta(t)\}$ is the lower bound, the other is to show that it is the upper bound.

To study the lower bound of the multifractal spectrum of self-affine Moran measures, we need the following two conclusions. The first is a version of law of large numbers, and we refer the readers to ~\cite[Theorem 1 in section 5.2]{CYS78} for details.
\begin{thm}\label{lln}
Let $\{X_n\}_{n=1}^\infty $ be a sequence of independent random variables satisfying that
$$
\sum_{n=1}^\infty \frac{\mathbb{E} (|X_n|^{\alpha_n} )}{n^{\alpha_n}}<\infty
$$
for some choice of $\alpha_n$ in $(0,2],$ where $\mathbb{E} (X_n)=0$ whenever $1\leq \alpha_n\leq 2$.
Then the sequence $\frac{1}{n}\sum_{i=1}^n X_i$ converges to 0 almost surely.
\end{thm}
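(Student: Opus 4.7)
The plan is to reduce almost sure convergence of the averages $S_n/n$ (where $S_n=\sum_{i=1}^n X_i$) to almost sure convergence of the series $\sum_{n=1}^\infty X_n/n$, and to prove convergence of that series by a truncation argument. The bridge between the two is Kronecker's lemma: if $\sum a_n$ converges in $\mathbb{R}$, then $n^{-1}\sum_{k=1}^n k a_k \to 0$; applied with $a_n = X_n/n$, almost sure convergence of $\sum X_n/n$ forces $S_n/n \to 0$ almost surely. So the whole task is to show that $\sum_{n=1}^\infty X_n/n$ converges almost surely under the hypothesis $\sum_n n^{-\alpha_n}\mathbb{E}|X_n|^{\alpha_n}<\infty$.

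For the truncation, I would set $Y_n=X_n\mathbf{1}_{\{|X_n|\leq n\}}$ and $Z_n=X_n-Y_n$. The tail contribution is handled by Borel--Cantelli, since
\[
\sum_{n=1}^\infty \mathbb{P}(Z_n\neq 0)=\sum_{n=1}^\infty \mathbb{P}(|X_n|>n)\leq \sum_{n=1}^\infty \frac{\mathbb{E}|X_n|^{\alpha_n}}{n^{\alpha_n}}<\infty,
\]
so $Z_n=0$ eventually and $\sum Z_n/n$ converges trivially. It therefore remains to show convergence of $\sum Y_n/n$. I would split this into the centered part and the expectation part, treating the two regimes of $\alpha_n$ separately.

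In the regime $1\leq \alpha_n\leq 2$, where $\mathbb{E}(X_n)=0$, I would apply the Khintchine--Kolmogorov convergence theorem to $\sum (Y_n-\mathbb{E} Y_n)/n$: using $|Y_n|^2\leq |Y_n|^{\alpha_n}\cdot n^{2-\alpha_n}$ on the truncation set, the variance bound
\[
\sum_{n=1}^\infty \frac{\mathrm{Var}(Y_n)}{n^2}\leq \sum_{n=1}^\infty \frac{\mathbb{E}|X_n|^{\alpha_n}}{n^{\alpha_n}}
\]
gives almost sure convergence. The centering is then controlled by $\mathbb{E}(Y_n)=-\mathbb{E}(Z_n)$, with $|\mathbb{E}(Z_n)|\leq n^{1-\alpha_n}\mathbb{E}|X_n|^{\alpha_n}$, whence $\sum |\mathbb{E}(Y_n)|/n$ is dominated by the same summable series. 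In the regime $0<\alpha_n<1$ the centering is not needed: using subadditivity $|y+z|^{\alpha_n}\leq |y|^{\alpha_n}+|z|^{\alpha_n}$ and Chebyshev at exponent $\alpha_n$, the summability of $\sum \mathbb{E}|Y_n/n|^{\alpha_n}$ together with independence gives convergence of $\sum Y_n/n$ via the three-series / moment-comparison route.

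The main obstacle will be bookkeeping the mixed-exponent hypothesis: the indices $\alpha_n$ vary with $n$ and the centering is only assumed in the range $[1,2]$, so one cannot apply a single classical LLN off the shelf. The cleanest way around this is to let each $n$ choose its own truncation level and its own case according to where $\alpha_n$ lies, and to notice that in every case the resulting bound collapses back to $\mathbb{E}|X_n|^{\alpha_n}/n^{\alpha_n}$, which is precisely the summability hypothesis we are given.
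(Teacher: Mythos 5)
Your proposal is correct; the paper does not prove this statement at all but simply cites it as Theorem 1 of Section 5.2 in Chow--Teicher, and your argument (truncate at level $n$, control the tails by Markov plus Borel--Cantelli, bound $\mathrm{Var}(Y_n)/n^2$ and the centering term by $\mathbb{E}|X_n|^{\alpha_n}/n^{\alpha_n}$ via the two-sided inequalities on the truncation set, then apply Khintchine--Kolmogorov and Kronecker) is exactly the standard textbook proof being referenced. The only soft spot is the $0<\alpha_n<1$ regime, where the phrase ``Chebyshev at exponent $\alpha_n$ \dots three-series route'' is vaguer than needed: the clean statement is that $|Y_n|\leq n$ gives $\mathbb{E}|Y_n|/n\leq \mathbb{E}|X_n|^{\alpha_n}/n^{\alpha_n}$, so $\sum_n |Y_n|/n<\infty$ almost surely by Tonelli, and no centering or independence is required there.
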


The next  is a version of Frostman Lemma, which is useful in finding the dimension of measures, see~\cite{Falco97} for details.
\begin{thm}\label{lem_frost}
Let $\nu$ be a finite Borel measure on $\R^d$. If $\liminf_{r\to 0} \frac{\log \nu(B(x,r)}{\log r} =s$ for $\nu$-almost every $x$. Then $\hdd \nu= s.$\\
\end{thm}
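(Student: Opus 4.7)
The plan is to prove the equality $\hdd \nu = s$ by separately establishing the two inequalities $\hdd \nu \geq s$ and $\hdd \nu \leq s$, where I take $\hdd \nu = \inf\{\hdd E : E \text{ Borel}, \nu(E) > 0\}$ as the Hausdorff dimension of the measure. The hypothesis splits into two usable statements: for every $\epsilon > 0$, (a) for $\nu$-a.e.\ $x$ there is $r_0(x) > 0$ with $\nu(B(x,r)) \leq r^{s-\epsilon}$ for all $0 < r \leq r_0(x)$, and (b) for $\nu$-a.e.\ $x$ there exist arbitrarily small $r$ with $\nu(B(x,r)) \geq r^{s+\epsilon}$. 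Statement (a) drives the lower bound via the mass distribution principle; statement (b) drives the upper bound via a Vitali-type covering argument.

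For the lower bound $\hdd \nu \geq s$, fix $\epsilon > 0$ and define
$$
A_k = \{x \in \R^d : \nu(B(x,r)) \leq r^{s-\epsilon} \text{ for all } 0 < r \leq 1/k\}.
$$
By (a), $\{A_k\}$ is increasing with $\nu\bigl(\bigcup_k A_k\bigr) = \nu(\R^d)$. For any Borel $E$ with $\nu(E) > 0$, pick $k$ with $\nu(E \cap A_k) > 0$. The mass distribution principle applied to $\nu$ restricted to $E \cap A_k$ then yields $\mathcal{H}^{s-\epsilon}(E \cap A_k) > 0$, whence $\hdd E \geq s - \epsilon$. Taking the infimum over admissible $E$ and letting $\epsilon \to 0$ gives $\hdd \nu \geq s$.

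For the upper bound $\hdd \nu \leq s$, fix $\epsilon > 0$ and let $F$ denote the $\nu$-full-measure set on which (b) holds. Given $\delta > 0$, the family $\mathcal{B}_\delta = \{B(x,r) : x \in F,\; 0 < r \leq \delta,\; \nu(B(x,r)) \geq r^{s+\epsilon}\}$ is a fine cover of $F$. By the standard $5r$-covering lemma there is a countable disjoint subfamily $\{B(x_i,r_i)\} \subset \mathcal{B}_\delta$ with $\bigcup_i B(x_i, 5r_i) \supset F$. Then
$$
\sum_i (10 r_i)^{s+\epsilon} \leq 10^{s+\epsilon} \sum_i \nu(B(x_i,r_i)) \leq 10^{s+\epsilon}\, \nu(\R^d),
$$
so $\mathcal{H}^{s+\epsilon}_{10\delta}(F) \leq 10^{s+\epsilon}\nu(\R^d)$. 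Letting $\delta \to 0$ yields $\mathcal{H}^{s+\epsilon}(F) < \infty$ and hence $\hdd F \leq s + \epsilon$; since $\nu(F) > 0$, this gives $\hdd \nu \leq s + \epsilon$, and $\epsilon \to 0$ completes the argument.

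The main obstacle, to my mind, is the upper-bound step: one needs a disjoint family whose $5$-fold enlargement still covers a full-$\nu$-measure subset of $F$ while preserving the crucial inequality $r_i^{s+\epsilon} \leq \nu(B(x_i,r_i))$ on each selected ball. This is precisely the content of the Vitali/Besicovitch $5r$-covering lemma in $\R^d$, and it works for any finite Borel measure without requiring doubling or further regularity assumptions on $\nu$.
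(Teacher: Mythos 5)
Your proof is correct: the lower bound via the exhaustion $A_k$ and the mass distribution principle, and the upper bound via the $5r$-covering lemma applied to the fine family of balls with $\nu(B(x,r))\geq r^{s+\epsilon}$, together establish $\hdd\nu=s$ under either standard definition of the dimension of a measure. The paper itself states this result without proof, citing Falconer's \emph{Techniques in Fractal Geometry}, and your argument is essentially the standard one given there (Propositions 10.2--10.3), so there is nothing to compare beyond noting that you have supplied the proof the paper delegates to the reference.
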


For $\delta=\frac{1}{m_1 m_2\ldots m_k} $, we simply write that $\mathcal{S}_k= \mathcal{S}_\delta$. For each $\mathbf{w}\in \Sigma^\infty$, there exists $U(\delta, \mathbf{w})\in \mathcal{U}_\delta$, and we write $S_k(\mathbf{w})$ for the approximate square in $\mathcal{S}_k$ containing $\Pi(w)$, i.e.
$$
S_k(\mathbf{w})=\Pi(U(\delta, \mathbf{w}))\in\mathcal{S}_k.
$$
Recall that, for the sequence $\big\{\big(p_k(w)>0\big)_{w\in \mathcal{D}_k}\big\}_{k=1}^\infty$ of probability vectors and  $w=(i,j)\in\D_k$,
$$
q_k(w)=q_k(j)=\sum_{(i,j)\in \mathcal{D}_k} p_k(i,j),\qquad \widehat{q}_k(w)=\widehat{q}_k(i)=\sum_{(i,j)\in \mathcal{D}_k} p_k(i,j)
$$
and
\begin{equation}\label{eq_uk}
u_k(w)=u_k(j)=\frac{q_k^t(j)}{\sum_{(i,j)\in \D_k} p_k^t(i,j)}.
\end{equation}

\subsection{Lower Bound}
We need to define a new measure to study the lower bound. For each $k>0$, given a real  $t>0$, we define $\beta_k(t)$ as the solution to
$$
  m_k^{-\beta_k(t)}\sum_{(i,j)\in \D_k}p^t_k(ij) u_k^{1-\zeta}(j)=1.
$$
We write
$$
  P_k(w)=P_k(i,j)=m_k^{- \beta_k(t)}p^t_k(i,j) u_k^{1-\zeta}(j),
$$
and
$$
  Q_k(w)=Q_k(j)=\sum_{(i,j)\in \D_k} P_k(i,j), \qquad \widehat{Q}_k(w)=\widehat{Q}_k(i)=\sum_{(i,j)\in \D_k \atop} P_k(i,j).
$$
It is clear that $(P_k(w))_{w\in \mathcal{D}_k}$, $(Q_k(w))_{w\in \mathcal{D}_k}$ and $(\widehat{Q}_k(w)_{w\in \mathcal{D}_k}$ are probability vectors.

Let $\widetilde{\mu}_t$ be the product Borel probability on $\Sigma^\infty$ defined by
\begin{equation}\label{defmut}
  \widetilde{\mu}_t([w_1w_2 \ldots w_k])=P_1(w_1)P_2(w_2)\ldots P_k(w_k),
\end{equation}
 and $\mu_t$ be the image measure of $\widetilde{\mu}_t$ under $\Pi$, i.e.
\begin{equation}\label{muttil}
  \mu_t=\widetilde{\mu}_t \circ \Pi^{-1}.
\end{equation}

Similar to \eqref{muas}, for each $S_k(\mathbf{w})\in \mathcal{S}_k$, the $\mu_t$ measure distributed on $S_k(\mathbf{w})$ is given by 
\begin{equation}\label{mutas}
  \mu_t(S_k(\mathbf{w}))=\left\{
  \begin{array}{lcl}
    P_1(w_1)\ldots P_l(w_l) Q_{l+1}(w_{l+1})\ldots Q_k(w_k),  & \  & l \leq k, \\
    P_1(w_1)\ldots P_k(w_k) \widehat{Q}_{k+1}(w_{k+1})\ldots \widehat{Q}_l(w_l), & \  & l > k.
  \end{array}
  \right.
\end{equation}

Recall that $E$ is the self-affine Moran set with frequency $\{f_\gamma\}_{\gamma\in \Gamma}$, we write
\begin{equation}\label{def_alpha}
\alpha(t)=\frac{\sum_{\gamma\in\Gamma} f_\gamma \sum_{w\in\D_\gamma}P_{\gamma}(w)\Big(-\zeta\log p_{\gamma}(w) - (1-\zeta)\log q_{\gamma}(w)\Big)}{\sum_{\gamma\in\Gamma}f_\gamma \log m_\gamma}.
\end{equation}

\begin{lem}\label{lb}
Let $E$ be the self-affine Moran set with the frequency $\{f_\gamma\}_{\gamma\in \Gamma}$.  Let $\mu$ and $\widetilde{\mu}_t$ be the measures given by \eqref{projmu} and~\eqref{defmut} respectively.
Then for $\widetilde{\mu}_t$-almost all $\mathbf{w}$ we have that
$$
\lim_{k\to \infty}\frac{\log \mu(S_k(\mathbf{w}))}{-\log m_1 m_2\ldots m_k}=\alpha(t).
$$
\end{lem}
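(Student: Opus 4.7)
The plan is to expand $\mu(S_k(\mathbf{w}))$ explicitly via~\eqref{muas}, take logarithms, and apply the law of large numbers (Theorem~\ref{lln}) to each resulting sum under the product measure $\widetilde{\mu}_t$. Dividing~\eqref{def_l} by $k$ together with the pattern-frequency hypothesis yields $l(k)/k\to\zeta$, so under $\zeta\leq 1$ we have $l\leq k$ for all sufficiently large $k$; the same hypothesis gives the deterministic limit $\frac{1}{k}\sum_{i=1}^k\log m_i\to\sum_{\gamma\in\Gamma}f_\gamma\log m_\gamma$. Plugging $l\leq k$ into~\eqref{muas} then gives
$$-\log\mu(S_k(\mathbf{w}))=\sum_{i=1}^l\bigl(-\log p_i(w_i)\bigr)+\sum_{i=l+1}^k\bigl(-\log q_i(w_i)\bigr).$$

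The main analytic ingredient is a law-of-large-numbers argument. Under $\widetilde{\mu}_t$ the coordinates $w_i$ are independent with marginals $P_i$, and since $\operatorname{card}\Gamma<\infty$ the variables $-\log p_i(w_i)$ and $-\log q_i(w_i)$ are bounded uniformly in $i$, so $\sum_n n^{-2}\mathbb{E}[X_n^2]<\infty$ for the centred versions. Applying Theorem~\ref{lln} with $\alpha_n=2$ gives
$$\frac{1}{n}\sum_{i=1}^n\bigl(-\log p_i(w_i)-\mathbb{E}_{\widetilde{\mu}_t}[-\log p_i(w_i)]\bigr)\to 0\quad\widetilde{\mu}_t\text{-a.s.},$$
and the analogous statement for $q_i$. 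Combined with the pattern-frequency hypothesis, the Cesàro means of the expectations converge to
$$A_p:=\sum_{\gamma\in\Gamma}f_\gamma\sum_{w\in\D_\gamma}P_\gamma(w)(-\log p_\gamma(w))\quad\text{and}\quad A_q:=\sum_{\gamma\in\Gamma}f_\gamma\sum_{w\in\D_\gamma}P_\gamma(w)(-\log q_\gamma(w))$$
respectively, which identifies the almost-sure limits of both block averages.

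Finally, I would assemble the pieces by writing
$$\frac{-\log\mu(S_k(\mathbf{w}))}{-\log(m_1 m_2\cdots m_k)}=\frac{l}{k}\cdot\frac{\frac{1}{l}\sum_{i=1}^l(-\log p_i(w_i))}{\frac{1}{k}\sum_{i=1}^k\log m_i}+\frac{k-l}{k}\cdot\frac{\frac{1}{k-l}\sum_{i=l+1}^k(-\log q_i(w_i))}{\frac{1}{k}\sum_{i=1}^k\log m_i},$$
so that $l/k\to\zeta$ and $(k-l)/k\to 1-\zeta$ give the limit $\frac{\zeta A_p+(1-\zeta)A_q}{\sum_{\gamma\in\Gamma}f_\gamma\log m_\gamma}$, which matches $\alpha(t)$ by~\eqref{def_alpha}. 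The only delicate point, and the main obstacle to making everything rigorous, is that the $q$-average is taken over the shifted block $\{l+1,\ldots,k\}$ rather than an initial segment. I would handle this by writing $\frac{1}{k-l}\sum_{i=l+1}^k=\frac{k}{k-l}\cdot\frac{1}{k}\sum_{i=1}^k-\frac{l}{k-l}\cdot\frac{1}{l}\sum_{i=1}^l$ so that both pieces converge on a common $\widetilde{\mu}_t$-full-measure set, and by subtracting initial-segment frequencies from global ones to recover the pattern frequency on the block. No additional exceptional null set is introduced, so the limit holds $\widetilde{\mu}_t$-almost surely.
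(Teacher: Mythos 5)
Your overall strategy --- expand $\mu(S_k(\mathbf{w}))$ via~\eqref{muas}, centre the summands, apply Theorem~\ref{lln} under the product measure $\widetilde{\mu}_t$, and feed in $l/k\to\zeta$ together with the frequency hypothesis --- is exactly the paper's argument, and for $\zeta<1$ your proof is correct and complete (including the correct identification of the limit with $\alpha(t)$ and the handling of the shifted block $\{l+1,\dots,k\}$).

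There is, however, a genuine gap at the boundary case $\zeta=1$, which the lemma must cover since the paper assumes only $\zeta\leq 1$. You claim that $l(k)/k\to\zeta$ and $\zeta\leq 1$ together force $l\leq k$ for all sufficiently large $k$. That inference fails when $\zeta=1$: the ratio $l/k\to 1$ is compatible with $l>k$ infinitely often. For instance, with two patterns of equal frequency having $(n,m)=(2,4)$ and $(4,2)$ one gets $\zeta=1$, yet at stages where the $m$-products have run ahead of the $n$-products the definition~\eqref{def_l} yields $l>k$. In that regime~\eqref{muas} switches to the branch $p_1(w_1)\cdots p_k(w_k)\widehat{q}_{k+1}(w_{k+1})\cdots\widehat{q}_l(w_l)$, which your expansion never touches: it involves the column marginals $\widehat{q}_i$ rather than the row marginals $q_i$, so your $A_q$-block analysis does not apply. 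The fix is what the paper does: treat $\zeta=1$ as a separate case, observe that the discrepancy block has length $|l-k|=o(k)$ with all factors $\log q_i$, $\log\widehat{q}_i$ uniformly bounded (since $\operatorname{card}\Gamma<\infty$ and all $p_\gamma(w)>0$), so that $\tfrac1k$ times the contribution of that block tends to $0$ and the limit collapses to $\frac{-\sum_\gamma f_\gamma\sum_w P_\gamma(w)\log p_\gamma(w)}{\sum_\gamma f_\gamma\log m_\gamma}$, which is $\alpha(t)$ at $\zeta=1$. Without this case your proof does not establish the stated almost-sure limit for all $\zeta\leq 1$.
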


\begin{proof}
Let $\{X_i\}_{i=1}^\infty$ be a sequence of  independent random variables on $(\Sigma^\infty,\mathcal{B},\mu_t)$ given by
$$
X_i(\mathbf{w})=\log p_i(w_i) - \sum_{w'\in\D_i} P_i(w') \log p_i(w'), \quad   \mathbf{w}=w_1w_2 \ldots w_i \ldots \in \Sigma^\infty.
$$
Note that there are only finitely many patterns in $\Gamma$, by simple calculation, we obtain that
$$
\mathbb{E}(X_i)=0 \qquad \text{and} \qquad \sum_{i=1}^{\infty}\frac{\mathbb{E}(X_i^2)}{i^2}<\infty.
$$
Hence, by Theorem~\ref{lln}, we have that
\begin{equation}\label{EX}
\lim_{k\to \infty} \frac{1}{k} \sum_{i=1}^k X_i(\mathbf{w})=0,
\end{equation}
for $\widetilde{\mu}_t$-a.e. $\mathbf{w}\in \Sigma^\infty$.

Similarly, by Theorem~\ref{lln}, for the random variables
$$
Y_i(\mathbf{w})=\log q_i(w_i) - \sum_{j'=0}^{m_i-1} Q_i(j') \log q_i(j'), \qquad w_i=(i',j')\in \mathcal{D}_i,
$$
we have that, for $\widetilde{\mu}_t$-a.e. $\mathbf{w}\in \Sigma^\infty$,
\begin{equation}\label{EY}
\lim_{k\to \infty} \frac{1}{k} \sum_{i=1}^k Y_i(\mathbf{w})=0.
\end{equation}
Therefore, by~\eqref{muas}, for each integer $k>0$, we obtain that
$$
\log \mu(S_k(\mathbf{w}))=\left\{\begin{array}{l}\sum_{i=1}^{l} X_i(\mathbf{w}) +
 \sum_{i=1}^{l} \sum_{w'\in \D_{i}} P_{i}(w') \log p_{i}(w')   \\ \hspace{0.5cm}+ \sum_{i=l+1}^{k} Y_i(\mathbf{w}) + \sum_{i=l+1}^{k} \sum_{w'\in \D_{i}} P_{i}(w') \log q_{i}(w') , \qquad  l\leq k\\
\sum_{i=1}^{k} X_i(\mathbf{w}) +  \sum_{i=1}^{k} \sum_{w'\in \D_{i}} P_{i}(w') \log p_{i}(w') \\ \hspace{0.5cm}+\sum_{i=k+1}^{l} Y_i(\mathbf{w}) + \sum_{i=k+1}^{l} \sum_{w'\in \D_{i}} P_{i}(w') \log \widehat{q}_{i}(w'), \qquad l>k,
\end{array} \right.
$$
where $l=l(k)$ is given by~\eqref{def_l}.

For each $\gamma\in \Gamma$, we write
$$c_\gamma(n)=\operatorname{card}\{k: (n_k,m_k,\mathcal{D}_k)=\gamma, k=1,2,\ldots, n \},
$$
and it is clear that
$f_\gamma=\lim_{n\to \infty} \frac{c_\gamma(n)}{n}$.  It is obvious that
\begin{equation}\label{denmk}
\lim_{k\to\infty } \frac{\log m_1\ldots m_k}{k}=\sum_{\gamma\in \Gamma} f_\gamma \log m_\gamma.
\end{equation}
By~\eqref{def_l} and~\eqref{def_zeta}, we have that
\begin{equation}\label{lim_kl}
\lim_{k\to\infty } \frac{l}{k}=\zeta.
\end{equation}

For $\zeta<1$,  there exists $K>0$ such that for all $k>K$ we have that $l<k$. Combining with~\eqref{EX} and~\eqref{EY}, we have that
\begin{eqnarray*}
 &&\hspace{-0.8cm} \lim_{k\to \infty} \frac{\log \mu(S_k(\mathbf{w}))}{k}
  = \lim_{k\to \infty} \left(\frac{1}{k}\sum_{i=1}^{l} \sum_{w'\in \D_{i}} P_{i}(w') \log p_{i}(w') + \frac{1}{k}\sum_{i=l+1}^{k} \sum_{w'\in \D_{i}} P_{i}(w') \log q_{i}(w')\right) \\
  &&= \lim_{k\to \infty} \sum_{\gamma\in\Gamma}\left(\frac{c_\gamma(l)}{k}  \sum_{w\in\D_\gamma}P_{\gamma}(w)\log p_{\gamma}(w) + \frac{c_\gamma(k)-c_\gamma(l)}{k} \sum_{w\in\D_\gamma}P_{\gamma}(w)\log q_{\gamma}(w)\right)\\
  &&= \sum_{\gamma\in\Gamma} f_\gamma\sum_{w\in\D_\gamma}P_{\gamma}(w)\Big(\zeta\log p_{\gamma}(w) + (1-\zeta)\log q_{\gamma}(w)\Big),
\end{eqnarray*}
for $\widetilde{\mu}_t$-a.e. $\mathbf{w}\in \Sigma^\infty$. Combine with \eqref{denmk}, we have that
$$
\lim_{k\to \infty}\frac{\log \mu(S_k(\mathbf{w}))}{-\log m_1\ldots m_k}
=\frac{\sum_{\gamma\in\Gamma} f_\gamma\sum_{w\in\D_\gamma}P_{\gamma}(w)\Big(-\zeta\log p_{\gamma}(w) - (1-\zeta)\log q_{\gamma}(w)\Big)}{\sum_{\gamma\in\Gamma}f_\gamma \log m_\gamma}.
$$

For $\zeta=1$, by\eqref{lim_kl},  we have that $\lim_{k\to\infty } \frac{l}{k}=1$. Therefore, both $k\leq l$ and $l>k$ may appear alternately, and we only  estimate the following for $l>k$ (the other case is identical).
\begin{eqnarray*}
&&\left|\sum_{\gamma\in\Gamma}f_\gamma \sum_{w\in\D_\gamma}P_{\gamma}(w)\log p_{\gamma}(w) - \frac{\log \mu(S_k(\mathbf{w}))}{k}\right| \\
&&=\bigg|\sum_{\gamma\in\Gamma}f_\gamma \sum_{w\in\D_\gamma}P_{\gamma}(w)\log p_{\gamma}(w)- \frac{1}{k} \sum_{i=1}^k X_i(\mathbf{w})- \frac{1}{k}\sum_{i=1}^{k} \sum_{w'\in \D_{i}} P_{i}(w') \log p_{i}(w') \\
&&\hspace{6.5cm}  - \frac{1}{k} \sum_{i=k+1}^l Y_i(\mathbf{w}) - \frac{1}{k} \sum_{i=k+1}^{l} \sum_{i'=0}^{n_i} \widehat{Q}_{i}(i') \log \widehat{q}_{i}(i') \bigg|  \\
&&\leq  \bigg| \frac{1}{k} \sum_{i=1}^k X_i(\mathbf{w}) + \frac{1}{k} \sum_{i=1}^k Y_i(\mathbf{w}) \bigg|  +\bigg|\sum_{\gamma\in\Gamma}f_\gamma \sum_{w\in\D_\gamma} P_{\gamma}(w) \log p_{\gamma}(w)  \\
&&\hspace{2cm}  -\sum_{\gamma\in\Gamma} \bigg(\frac{c_\gamma(l)}{k}  \sum_{w\in\D_\gamma}P_{\gamma}(w)\log p_{\gamma}(w) + \frac{c_\gamma(l)-c_\gamma(k)}{k} \sum_{i'=0}^{n_\gamma}  \widehat{Q}_{i}(i') \log \widehat{q}_{i}\bigg)\bigg|.
\end{eqnarray*}
Combining~\eqref{EX}, \eqref{EY} and  \eqref{lim_kl} with $f_\gamma=\lim_{k\to \infty} \frac{c_\gamma(k)}{k}$ and $\lim_{k\to\infty } \frac{l}{k}=1$, we have that
\begin{eqnarray*}
\hspace{-0.8cm} \lim_{k\to \infty} \frac{\log \mu(S_k(\mathbf{w}))}{k}
  &=& \sum_{\gamma\in\Gamma}f_\gamma \sum_{w\in\D_\gamma}P_{\gamma}(w)\log p_{\gamma}(w),
\end{eqnarray*}
for $\widetilde{\mu}_t$-a.e. $\mathbf{w}\in \Sigma^\infty$.
 Combine with \eqref{denmk}, we have that
$$
\lim_{k\to \infty}\frac{\log \mu(S_k(\mathbf{w}))}{-\log m_1\ldots m_k}
=\frac{-\sum_{\gamma\in\Gamma} f_\gamma\sum_{w\in\D_\gamma}P_{\gamma}(w) \log p_{\gamma}(w) }{\sum_{\gamma\in\Gamma}f_\gamma \log m_\gamma}.
$$

Therefore for $\zeta\leq 1$, by \eqref{def_alpha}, it follows that
\begin{eqnarray*}
\lim_{k\to \infty}\frac{\log \mu(S_k(\mathbf{w}))}{-\log m_1\ldots m_k}
&=&\alpha(t),
\end{eqnarray*}
and the conclusion holds.
\end{proof}

\begin{lem}\label{lem_ld}
Let $E$ be the self-affine Moran set with the frequency $\mathbf{f}=\{f_\gamma\}_{\gamma\in\Gamma}$ and $\zeta\leq 1$. Let $\mu$ and $\mu_t$ be given by \eqref{projmu} and~\eqref{muttil}. Then, for $\mu_t$-almost all $x$,
$$
    \lcd \mu(x) =\alpha(t).
$$
\end{lem}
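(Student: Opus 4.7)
The plan is to deduce the local dimension of $\mu$ from the behaviour of $\mu$ on approximate squares (which is controlled by Lemma~\ref{lb}), and then to pass from approximate squares to Euclidean balls by exploiting the bounded aspect ratio afforded by $N^+=\sup\{n_k,m_k\}<\infty$.

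First I would transfer Lemma~\ref{lb} from a $\widetilde{\mu}_t$-a.e.\ statement on $\Sigma^\infty$ to a $\mu_t$-a.e.\ statement on $E$. Since $\mu_t=\widetilde{\mu}_t\circ\Pi^{-1}$ and the countable union of boundaries of approximate squares has $\mu_t$-measure zero, for $\mu_t$-a.e.\ $x$ there is a unique $\mathbf{w}\in\Pi^{-1}(x)$; setting $S_k(x):=S_k(\mathbf{w})$ and $r_k=(m_1 m_2\cdots m_k)^{-1}$, Lemma~\ref{lb} then gives
$$
\lim_{k\to\infty}\frac{\log\mu(S_k(x))}{\log r_k}=\alpha(t) \quad\text{for $\mu_t$-a.e.\ }x.
$$

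Next, for each $r>0$, let $k=k(r)$ so that $r_k\le r<r_{k-1}$. The bounded aspect ratio of $S_k(x)$ together with the comparability $r_k\le r\le N^+ r_k$ (which also yields $\log r/\log r_k\to 1$ as $r\to 0$) give the chain of inclusions
$$
S_{k+c}(x)\subseteq B(x,r)\cap E\subseteq\bigcup_{S'\in\mathcal{N}_k(x,r)} S',
$$
where $c$ depends only on $N^+$, and $\mathcal{N}_k(x,r)$ is the family of level-$k$ approximate squares intersecting $B(x,r)$, with $|\mathcal{N}_k(x,r)|\le N$ for some constant $N$ depending only on $N^+$. The left inclusion holds for $\mu_t$-a.e.\ $x$ and all sufficiently large $k$, once $x$ lies at distance $\ge r$ from $\partial S_{k+c}(x)$. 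Taking logarithms, dividing by $\log r<0$, and using $\log r/\log r_k\to 1$ then sandwich $\log\mu(B(x,r))/\log r$ between $\log\mu(S_{k+c}(x))/\log r$ and $\bigl(\log N+\log\max_{S'}\mu(S')\bigr)/\log r$.

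The upper bound $\lcd\mu(x)\le\alpha(t)$ follows at once from the first paragraph applied to $S_{k+c}(x)$. The main obstacle is the lower bound, because without separation conditions the neighbours $S'\in\mathcal{N}_k(x,r)$ can \emph{a priori} have $\mu$-measure much larger than $\mu(S_k(x))$. I would handle this by observing that each neighbour $S'$ is indexed by a sequence $\mathbf{w}'$ differing from $\mathbf{w}$ only by a ``carry'' in the last few of its $x$- or $y$-coordinates, and that for $\widetilde{\mu}_t$-a.e.\ $\mathbf{w}$ the depth of this carry at level $k$ is $o(k)$ by a Borel--Cantelli argument: the $\widetilde{\mu}_t$-probability that the last $n$ digits are all extremal decays exponentially in $n$. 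Consequently $|\log\mu(S')-\log\mu(S_k(x))|=o(k)$ uniformly over $S'\in\mathcal{N}_k(x,r)$, which together with $|\log r_k|\asymp k$ gives $\log\max_{S'}\mu(S')/\log r_k\to\alpha(t)$. Combining this with the sandwich yields $\lcd\mu(x)=\alpha(t)$ for $\mu_t$-a.e.\ $x$.
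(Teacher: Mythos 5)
Your overall strategy is sound and genuinely different from the paper's: you keep the radius comparable to the approximate square and try to show that the $O(1)$ neighbouring level-$k$ squares meeting $B(x,r)$ have $\mu$-measure within a factor $e^{o(k)}$ of $\mu(S_k(x))$, whereas the paper instead shrinks the radius to $(m_1\cdots m_k)^{-1}e^{-\sqrt{k}}$ and shows by Borel--Cantelli that for $\mu_t$-a.e.\ $x$ the shrunk ball is eventually contained in a \emph{single} approximate square, so no neighbour comparison is ever needed. The easy direction (the $\limsup$ bound via $S_{k+c}(x)\subseteq B(x,r)$) is fine in both treatments.

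The genuine gap is in your Borel--Cantelli step for the hard direction. You assert that ``the $\widetilde{\mu}_t$-probability that the last $n$ digits are all extremal decays exponentially in $n$.'' That probability is a product such as $Q_{k-n+1}(0)\cdots Q_{k}(0)$, and the lemma is stated with \emph{no} separation hypothesis, so it is entirely possible that some pattern $\gamma\in\Gamma$ with $f_\gamma>0$ has all of $\mathcal{D}_\gamma$ in a single extreme row, i.e.\ $Q_\gamma(0)=1$ (or $Q_\gamma(m_\gamma-1)=1$, or the analogous column statements). For such levels the corresponding factor equals $1$ and the product need not decay, so your carry-depth estimate, and hence the claim $|\log\mu(S')-\log\mu(S_k(x))|=o(k)$, is unjustified exactly in the degenerate situations the lemma must cover. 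The paper's proof confronts this head-on: it sets $Q^+$ to be the maximum only over those quantities that are strictly less than $1$, and when some $Q_{k_0}(0)=1$ it argues \emph{geometrically} that the opposite row at level $k_0$ is empty, so the ball cannot escape the approximate square through that side at all (cases (2)--(4) of its case analysis). Your argument can be repaired in the same spirit --- a carry that propagates through a level with $Q_{k'}(0)=1$ necessarily lands in an empty row, so the offending neighbour $S'$ satisfies $\mu(S')=0$ and can be discarded from $\mathcal{N}_k(x,r)$ --- but as written the proof fails for, say, a sequence in which every pattern occupies only its bottom row. You should add this case distinction (for all four sides) before concluding.
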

\begin{proof}

For each integer $k>0$, we write
$$
A_k=\{x\in E: B\big(x,(m_1\ldots m_k)^{-1}e^{-\sqrt{k}}\big)\cap E \subset S_k(\mathbf{w}), \text{ for some }\bw\in\Pi^{-1}(\{x\})\}.
$$

For each $x\in E$, choose $\bw=(i_1,j_1)(i_2,j_2)\ldots\in\Pi^{-1}(\{x\})$. Let $\widetilde{S}_k(\bw)$ be the rectangle with height $(m_1 \ldots m_k)^{-1}$ and width $(n_1 \ldots n_l)^{-1}$ given by 
$$
\widetilde{S}_k(\bw)=\Big[\sum_{h=1}^l\frac{i_h}{n_1 \ldots n_h}, \sum_{h=1}^l\frac{i_h}{n_1 \ldots n_h}+\frac{1}{n_1 \ldots n_l}\Big]\times \Big[\sum_{h=1}^k\frac{j_h}{m_1 \ldots m_h}, \sum_{h=1}^k\frac{j_h}{m_1 \ldots m_h}+\frac{1}{m_1 \ldots m_k}\Big].
$$

 It is clear that $\widetilde{S}_k(\bw) \cap E = S_k(\bw)$.
Let $L_k^B$($L_k^T,L_k^L,L_k^R$) be the collection of $x\in E$ such that the distance from $x$ to the bottom(top, left, right) side of $\widetilde{S}_k(\bw)$ is less than $(m_1\ldots m_k)^{-1}e^{-\sqrt{k}}$, for some $\bw\in\Pi^{-1}(\{x\})$. It is clear that $$A_k^c \subset L_k^B \cup L_k^T \cup L_k^L \cup L_k^R.$$

For each $x\in L_k^B$, it is clear that $j_{k+1} = \ldots =j_{k+[\sqrt{k}/\log N^+]}=0$, and  the measure of $L_k^B$ is bounded by
$$
\mu_t(L_k^B) \leq Q_{k+1}(0)\ldots Q_{k+[\sqrt{k}/\log N^+]}(0).
$$
By the same argument, we have that
\begin{eqnarray*}
\mu_t(L_k^T) &\leq& Q_{k+1}(m_{k+1}-1)\ldots Q_{k+[\sqrt{k}/\log N^+]}(m_{k+[\sqrt{k}/\log N^+]}-1);\\
\mu_t(L_k^L) &\leq& \widehat{Q}_{l+1}(0)\ldots \widehat{Q}_{l+[\sqrt{l}/\log N^+]}(0);\\
\mu_t(L_k^R) &\leq& \widehat{Q}_{l+1}(n_{l+1}-1)\ldots Q_{l+[\sqrt{l}/\log N^+]}(n_{l+[\sqrt{l }/\log N^+]}-1).
\end{eqnarray*}

Let $Q^+=\max_{\gamma\in\Gamma} \{Q_{\gamma}(0)<1,Q_{\gamma}(m_\gamma-1)<1,\widehat{Q}_\gamma(0)<1, \widehat{Q}_\gamma(n_\gamma-1) <1\}^{1/(\log N^+ +1)}$. Since $  \operatorname{card}<\infty$, it is clear that $Q^+<1$.
If $Q_{k'}(0)\neq 1$ for all $k+1\leq k'\leq k+[\sqrt{k}/\log N^+]$,  the measure of $L_k^B$ is bounded by
$$
\mu_t(L_k^B) \leq Q_{k+1}(0)\ldots Q_{k+[\sqrt{k}/\log N^+]}(0) \leq (Q^+)^{\sqrt{k}}.
$$
Otherwise, there exists an integer $k_0$, $k+1\leq k_0 \leq k+[\sqrt{k}/\log N^+]$ such that $Q_{k_0}(0)=1$. It implies that $r_{k_0}(m_{k_0}-1)=0$, that is,
 $L_k^T$ is empty, and for each $x\in L_k^B$, there exists $\bw\in \Pi^{-1}(\{x\})$ such that
\begin{eqnarray*}
B\big(x,(m_1\ldots m_k)^{-1}e^{-\sqrt{k}}\big)\cap E &\subset& B\big(x,(m_1\ldots m_{k_0})\big)\cap E \\
&\subset& \bigcup\{S_{k}(\mathbf{w}')\colon j_1'=j_1, \ldots, j_{k}'=j_{k}\}.
\end{eqnarray*}
Hence, for each $x \in L_k^B \backslash (L_k^L\cup L_k^R)$, we have that $B\big(x,(m_1\ldots m_k)^{-1}e^{-\sqrt{k}}\big)\cap E \subset S_k(\bw)$ for some  $\bw\in \Pi^{-1}(\{x\})$, that is to say,  $x\in A_k$. Therefore, we obtain that $A_k^c \subset L_k^L \cup L_k^R$.

The similar argument applies to each of the other three sides.

For each given $k>0$, we have to estimate the measure distributed on  $A_k^c$ in the following four cases.

(1)For all $k+1<k'<k+[\sqrt{k}/\log N^+]$ and all $l+1 \leq l'\leq l+[\sqrt{l}/\log N^+]$, we have that $Q_{k'}(0),Q_{k'}(m_{k'}-1), \widehat{Q}_{l'}(0), \widehat{Q}_{l'}(n_{l'}-1) < 1$. Since $\mu_t(L_k^*)\leq (Q^+)^{\sqrt{k}}$ where $*= B, T, L, R$, it is clear that
$$
\mu_t(A_k^c) \leq \mu_t(L_k^B) + \mu_t(L_k^T) + \mu_t(L_k^L) + \mu_t(L_k^R) \leq 4(Q^+)^{\sqrt{k}}.
$$

(2)There exists an integer $k+1\leq k_0 \leq k+[\sqrt{k}/\log N^+]$ such that $Q_{k_0}(0)=1$ or $Q_{k_0}(m_{k_0}-1)=1$, and for all $l+1 \leq l'\leq l+[\sqrt{l}/\log N^+]$, $\widehat{Q}_{l'}(0), \widehat{Q}_{l'}(n_{l'}-1) <1$. Since $A_k^c \subset L_k^L \cup L_k^R$ and $\mu_t(L_k^*)\leq (Q^+)^{\sqrt{k}}$ where $*=L, R$,  it is clear that
$$
\mu_t(A_k^c) \leq  \mu_t(L_k^L) + \mu_t(L_k^R) \leq 2(Q^+)^{\sqrt{k}}.
$$

(3)There exists an integer $l+1\leq l_0 \leq l+[\sqrt{l}/\log N^+]$ such that $\widehat{Q}_{l_0}(0)=1$ or $\widehat{Q}_{l_0}(n_{l_0}-1)=1$, and for all $k+1 \leq k'\leq k+[\sqrt{k}/\log N^+]$, $Q_{k'}(0), Q_{k'}(n_{k'}-1) < 1$. Similar to (2), we have that
$$
\mu_t(A_k^c) \leq  \mu_t(L_k^B) + \mu_t(L_k^T) \leq 2(Q^+)^{\sqrt{k}}.
$$

(4)There exists $k+1\leq k_0 \leq k+[\sqrt{k}/\log N^+]$ such that $Q_{k_0}(0)=1$ or $Q_{k_0}(m_{k_0}-1)=1$, and $l+1\leq l_0 \leq l+[\sqrt{l}/\log N^+]$ such that $Q_{l_0}(0)=1$ or $\widehat{Q}_{l_0}(n_{l_0}-1)=1$. At least one of $L_k^B$ and $L_k^T$ is empty, and one of $L_k^B$ and $L_k^T$ is empty. This implies that for each $x\in E$, there exists $\bw\in \Pi^{-1}(\{x\})$ such that $B\big(x,(m_1\ldots m_k)^{-1}e^{-\sqrt{k}}\big)\cap E \subset S_k(\bw)$, and  it immediately follows that $E \subset A_k$. Hence $A_k^c \cap E=\emptyset$, and thus $\mu_t(A_k^c)=0$.

Therefore the measure of $A_k^c$ is bounded by $\mu_t(A_k^c)\leq 4(Q^+)^{\sqrt{k}}$. Since $Q^+<1$, it follows that
$$
\sum_{k=1}^\infty \mu_t(A_k^c)  \leq 4 \sum_{k=1}^\infty (Q^+)^{\sqrt{k}}<\infty.
$$
By Borel-Cantelli Lemma, it follows that
$$\mu_t(A_k^c\,i.o.)=0.
$$
Therefore, for $\mu_t$-almost all $x$, we have that
$$
\mu\Big(B\big(x, (m_1\ldots m_k)^{-1}e^{-\sqrt{k}}\big)\Big)\leq \mu(S_k(\mathbf{w})),
$$
for sufficiently large $k$.  For each $r>0$, there exists a unique integer $k$ such that
$$
(m_1\ldots m_{k+1})^{-1}e^{-\sqrt{k+1}}\leq r<(m_1\ldots m_k)^{-1}e^{-\sqrt{k}},
$$
which implies that $\mu(B(x,r))\leq \mu(S_k(x))$. Therefore, we have that
$$
\liminf_{r\to 0}\frac{\log \mu(B(x,r))}{\log r}\geq \liminf_{k\to \infty}\frac{\log \mu(S_k(\mathbf{w}))}{-\log m_1 m_2\ldots m_k}.
$$

Similarly, we have that
$$
\limsup_{r\to 0}\frac{\log \mu(B(x,r))}{\log r}\leq \limsup_{k\to \infty}\frac{\log \mu(S_k(\mathbf{w}))}{-\log m_1 m_2\ldots m_k}.
$$
By Lemma~\ref{lb}, The conclusion holds.
\end{proof}

\begin{lem}\label{dimmut}
Let $E$ be the self-affine Moran set with the frequency $\mathbf{f}=\{f_\gamma\}_{\gamma\in\Gamma}$ and $\zeta\leq 1$.
Let $\widetilde{\mu}_t$ and $\mu_t$ be defined by \eqref{defmut} and \eqref{muttil}. If
$$
\liminf_{k\to \infty}\frac{\log \mu_t(S_k(\mathbf{w}))}{-\log m_1 m_2\ldots m_k}=\delta,
$$
for $\widetilde{\mu}_t$-a.e. $\mathbf{w}\in \Sigma^\infty$, then $\hdd \mu_t=\delta$.
\end{lem}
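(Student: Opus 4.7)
The plan is to convert the hypothesis—phrased in terms of $\mu_t$-mass on approximate squares at the discrete sequence of scales $(m_1\cdots m_k)^{-1}$—into the Frostman-type statement about $\mu_t$-mass on Euclidean balls as $r\to 0$, and then invoke Theorem~\ref{lem_frost}. Since $\mu_t=\widetilde{\mu}_t\circ\Pi^{-1}$, the assumption $\liminf_k\log\mu_t(S_k(\mathbf{w}))/(-\log m_1\cdots m_k)=\delta$ for $\widetilde{\mu}_t$-a.e.\ $\mathbf{w}$ pushes forward directly to the same statement for $\mu_t$-a.e.\ $x=\Pi(\mathbf{w})$. What remains is to promote this discrete-scale behaviour to the Frostman criterion
$$
\liminf_{r\to 0}\frac{\log\mu_t(B(x,r))}{\log r}=\delta\qquad\text{for }\mu_t\text{-a.e.\ }x.
$$

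The upper inclusion $S_k(\mathbf{w})\subset B(x,R_k)$ with $R_k:=\sqrt{2}(m_1\cdots m_k)^{-1}$ is free, since an approximate square has both sides at most $(m_1\cdots m_k)^{-1}$. The reverse inclusion is the delicate one; for it I would reuse verbatim the Borel--Cantelli / $A_k^c$ computation already performed in the proof of Lemma~\ref{lem_ld}, which is in fact written in terms of the measure $\mu_t$ (not $\mu$). That computation yields, for $\mu_t$-a.e.\ $x$ and all sufficiently large $k$, a symbol sequence $\mathbf{w}\in\Pi^{-1}(\{x\})$ with $B(x,r_k)\cap E\subset S_k(\mathbf{w})$, where $r_k:=(m_1\cdots m_k)^{-1}e^{-\sqrt{k}}$. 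Combining both inclusions gives the sandwich
$$
\mu_t\bigl(B(x,r_k)\bigr)\le \mu_t\bigl(S_k(\mathbf{w})\bigr)\le \mu_t\bigl(B(x,R_k)\bigr).
$$
Because $\sqrt{k}=o(\log m_1\cdots m_k)$ by \eqref{denmk}, both $\log r_k$ and $\log R_k$ are asymptotic to $-\log m_1\cdots m_k$, so every log-ratio in the sandwich contributes the same limiting value.

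To pass from the discrete scales $r_k,R_k$ to the continuum limit $r\to 0$, for arbitrary small $r>0$ locate the $k$ with $r_{k+1}\le r<r_k$. Then $B(x,r)\cap E\subset B(x,r_k)\cap E\subset S_k(\mathbf{w})$, and using $\log r \asymp \log r_{k+1}\asymp -\log m_1\cdots m_k$ one gets
$$
\liminf_{r\to 0}\frac{\log\mu_t(B(x,r))}{\log r}\ge \liminf_{k\to\infty}\frac{\log\mu_t(S_k(\mathbf{w}))}{-\log m_1\cdots m_k}=\delta.
$$
For the matching upper bound I would evaluate the ratio along the subsequence $r=R_k$ and use $\mu_t(B(x,R_k))\ge\mu_t(S_k(\mathbf{w}))$ together with $\log R_k\sim -\log m_1\cdots m_k$, giving $\liminf_{r\to 0}\log\mu_t(B(x,r))/\log r\le \delta$. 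Feeding the resulting equality into Theorem~\ref{lem_frost} yields $\hdd\mu_t=\delta$.

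The main delicacy I anticipate is purely bookkeeping: signs must be tracked carefully when dividing an inequality between the negative quantities $\log\mu_t(\cdot)$ by the negative $\log r$, and one has to verify honestly that the correction factor $e^{-\sqrt{k}}$ is negligible against $\log m_1\cdots m_k$. Everything substantive has already been done in Lemma~\ref{lem_ld}; the present lemma is essentially the observation that the same scaffolding specialised to the auxiliary measure $\mu_t$ computes the Hausdorff dimension of $\mu_t$ itself.
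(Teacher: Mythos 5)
Your proposal is correct and follows essentially the same route as the paper, whose proof of this lemma simply reuses the Borel--Cantelli/$A_k$ sandwich argument from Lemma~\ref{lem_ld} (applied to $\mu_t$ in place of $\mu$) and then invokes Theorem~\ref{lem_frost}. Your write-up just makes explicit the two inclusions, the sign bookkeeping, and the negligibility of the $e^{-\sqrt{k}}$ factor that the paper leaves implicit.
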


\begin{proof}
Using the same argument as in Lemma~\ref{lem_ld}, we have that
$$
\lim_{k\to \infty}\frac{\log \mu_t(B(x,r))}{\log r} = \lim_{k\to \infty}\frac{\log \mu_t(S_k(\mathbf{w}))}{-\log m_1 m_2\ldots m_k},
$$
for $\mu_t$-a.e. $x\in E$. By Theorem~\ref{lem_frost}, we conclude that $\hdd \mu_t=\delta$.
\end{proof}

\begin{lem}\label{mut}
Let $E$ be the self-affine Moran set with the frequency $\mathbf{f}=\{f_\gamma\}_{\gamma\in\Gamma}$ and $\zeta\leq 1$.
Let $\mu_t$ be defined by \eqref{muttil}, then
$$
\hdd \mu_t = t\alpha(t)+\beta(t).
$$
\end{lem}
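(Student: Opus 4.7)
The plan is to apply Lemma~\ref{dimmut} with $\delta=t\alpha(t)+\beta(t)$, which reduces the statement to verifying that
$$
\lim_{k\to\infty}\frac{\log\mu_t(S_k(\mathbf{w}))}{-\log m_1m_2\cdots m_k}=t\alpha(t)+\beta(t)
$$
for $\widetilde{\mu}_t$-a.e.\ $\mathbf{w}\in\Sigma^{\infty}$. I would obtain this limit by rerunning the proof of Lemma~\ref{lb} verbatim, with the triples $(p_k,q_k,\widehat{q}_k)$ replaced throughout by $(P_k,Q_k,\widehat{Q}_k)$.

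Concretely, for $l\le k$, \eqref{mutas} gives $\log\mu_t(S_k(\mathbf{w}))=\sum_{i=1}^{l}\log P_i(w_i)+\sum_{i=l+1}^{k}\log Q_i(j_i)$. Taking logs in the definitions,
$$
\log P_k(i,j)=-\beta_k(t)\log m_k+t\log p_k(i,j)+(1-\zeta)\log u_k(j),
$$
$$
\log Q_k(j)=-\beta_k(t)\log m_k+(1-\zeta)\log u_k(j)+\log A_k(j),
$$
where $A_k(j):=\sum_{(i,j)\in\D_k}p_k^t(i,j)$, so that $u_k(j)=q_k^t(j)/A_k(j)$. Since $\operatorname{card}\Gamma<\infty$, under $\widetilde{\mu}_t$ the centred coordinate variables $\log p_i(w_i)$, $\log q_i(j_i)$ and $\log A_i(j_i)$ are independent with uniformly bounded variance, so Theorem~\ref{lln} applies. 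Combined with $c_\gamma(k)/k\to f_\gamma$, the ratio $l(k)/k\to\zeta$ from~\eqref{lim_kl}, and~\eqref{denmk}, this lets me pass to the limit term by term exactly as in Lemma~\ref{lb}.

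The essential algebraic point is that, writing $E_\gamma[X]:=\sum_{w\in\D_\gamma}P_\gamma(w)X(w)$ and using $\log u_\gamma(j)=t\log q_\gamma(j)-\log A_\gamma(j)$ together with the identity $\zeta(1-\zeta)+(1-\zeta)^2=(1-\zeta)$, the $E_\gamma[\log u_\gamma]$ and $E_\gamma[\log A_\gamma]$ contributions collapse into $(1-\zeta)tE_\gamma[\log q_\gamma]$. After this cancellation,
$$
\lim_{k\to\infty}\frac{\log\mu_t(S_k(\mathbf{w}))}{k}=-\sum_{\gamma\in\Gamma}f_\gamma\beta_\gamma(t)\log m_\gamma+t\sum_{\gamma\in\Gamma}f_\gamma\sum_{w\in\D_\gamma}P_\gamma(w)\bigl[\zeta\log p_\gamma(w)+(1-\zeta)\log q_\gamma(w)\bigr].
$$
Dividing by $-\log(m_1\cdots m_k)/k\to\sum_\gamma f_\gamma\log m_\gamma$ and recognising the first quotient via~\eqref{betat} and the second via~\eqref{def_alpha} yields exactly $\beta(t)+t\alpha(t)$, and Lemma~\ref{dimmut} then delivers $\hdd\mu_t=t\alpha(t)+\beta(t)$.

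The main obstacle I anticipate is the bookkeeping for the borderline case $\zeta=1$, where $l(k)$ and $k$ may swap order infinitely often; this is handled verbatim by the $|\cdots|$-type telescope estimate displayed in the second half of the proof of Lemma~\ref{lb}, which transplants without change once $p,q$ are replaced by $P,Q$. Beyond this careful accounting no new difficulty is expected.
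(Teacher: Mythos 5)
Your proposal is correct and follows essentially the same route as the paper: the paper also defines the centred variables $\log P_i(w_i)$ and $\log Q_i(w_i)$, invokes Theorem~\ref{lln}, repeats the argument of Lemma~\ref{lb} with $(P,Q,\widehat Q)$ in place of $(p,q,\widehat q)$, and concludes via Lemma~\ref{dimmut}. The only difference is that the paper states the resulting identity $\sum_\gamma f_\gamma\sum_w P_\gamma(w)\bigl(-\zeta\log P_\gamma(w)-(1-\zeta)\log Q_\gamma(w)\bigr)\big/\sum_\gamma f_\gamma\log m_\gamma=t\alpha(t)+\beta(t)$ without proof, whereas you carry out the cancellation $\zeta(1-\zeta)+(1-\zeta)^2=1-\zeta$ and the substitution $\log u_\gamma=t\log q_\gamma-\log A_\gamma$ explicitly, which is a correct and welcome verification of that step.
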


\begin{proof}
Similar to Lemma~\ref{lb},  for each integer $i>0$, let $X_i$ and $Y_i$ be the independent random variables on $(\Sigma^\infty,\mathcal{B},\mu_t)$ given by
\begin{eqnarray*}
X_i(\mathbf{w})&=&\log P_i(w_i) - \sum_{w'\in\D_i} P_i(w') \log P_i(w'), \\
Y_i(\mathbf{w})&=&\log Q_i(w_i) - \sum_{w'\in\D_i} P_i(w') \log Q_i(w'),
\end{eqnarray*}
where $ \mathbf{w}=w_1w_2 \ldots w_i \ldots \in \Sigma^\infty.$ By Theorem~\ref{lln}, we have that
$$
\lim_{k\to \infty} \frac{1}{k} \sum_{i=1}^k X_i(\mathbf{w})=0, \qquad \lim_{k\to \infty} \frac{1}{k} \sum_{i=1}^k Y_i(\mathbf{w})=0.
$$
for $\widetilde{\mu}_t$-a.e. $\mathbf{w}\in \Sigma^\infty$.

By the same argument as Lemma~\ref{lb},
we have that
\begin{eqnarray*}
\lim_{k\to \infty}\frac{\log \mu_t(S_k(\mathbf{w}))}{\log r_k}
 &=& \frac{\sum_{\gamma\in\Gamma} f_\gamma\sum_{w\in\D_\gamma}P_{\gamma}(w)\Big(-\zeta\log P_{\gamma}(w) - (1-\zeta)\log Q_{\gamma}(w)\Big)}{\sum_{\gamma\in\Gamma}f_\gamma \log m_\gamma} \\
&=& t\alpha(t)+\beta(t).
\end{eqnarray*}
Then by Lemma~\ref{dimmut}, we have that
$
\hdd \mu_t = t\alpha(t)+\beta(t).
$
\end{proof}

\begin{thm}\label{thmlb}
Let $E$ be the self-affine Moran set with the frequency $\mathbf{f}=\{f_\gamma\}_{\gamma\in\Gamma}$ and $\zeta\leq 1$. Let $\mu$  be the self-affine Moran measure given by \eqref{projmu} .  Then
$$
H(\alpha) \geq \inf_t\{t\alpha+\beta(t)\}.
$$
\end{thm}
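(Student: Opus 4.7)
The plan is to exploit the auxiliary measure $\mu_t$ constructed in \eqref{muttil} as a \emph{Frostman test measure} carried by the level set $E_{\alpha(t)}$. The work done in Lemma~\ref{lem_ld} and Lemma~\ref{mut} essentially finishes the dimension calculation for $\mu_t$, so Theorem~\ref{thmlb} will come from combining these two lemmas with a Legendre-transform identification of the optimizing parameter.

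First, fix any $t\in\bbbr$. By Lemma~\ref{lem_ld}, the local dimension of $\mu$ equals $\alpha(t)$ at $\mu_t$-a.e.\ $x$, so $\mu_t$ is concentrated on the level set $E_{\alpha(t)}$. By the mass distribution principle (or equivalently by the general fact that $\hdd A\ge \hdd\nu$ for any finite Borel measure $\nu$ with $\nu(A)>0$), this yields
\[
H(\alpha(t)) \;=\; \hdd E_{\alpha(t)} \;\ge\; \hdd \mu_t \;=\; t\alpha(t)+\beta(t),
\]
where the last equality is Lemma~\ref{mut}. Thus the conclusion $H(\alpha)\ge \inf_s\{s\alpha+\beta(s)\}$ will follow once I can, for every $\alpha\in(\alpha_{\min},\alpha_{\max})$, exhibit a $t=t(\alpha)\in\bbbr$ such that $\alpha(t)=\alpha$ \emph{and} $s\mapsto s\alpha+\beta(s)$ attains its infimum at $s=t$.

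To produce such a $t$ I would differentiate the defining equation \eqref{begat} for $\beta_\gamma(t)$ implicitly. Using $\sum_{(i,j)\in\D_\gamma}m_\gamma^{-\beta_\gamma(t)}p_\gamma^t(ij)u_\gamma^{1-\zeta}(j)=1$ and the definition of $u_\gamma$ and $P_\gamma$, a direct computation gives
\[
-\beta_\gamma'(t)\,\log m_\gamma \;=\; \sum_{w\in\D_\gamma} P_\gamma(w)\bigl(-\zeta\log p_\gamma(w)-(1-\zeta)\log q_\gamma(w)\bigr),
\]
and averaging against $f_\gamma$ via \eqref{betat} together with the definition \eqref{def_alpha} yields $-\beta'(t)=\alpha(t)$. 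A second differentiation identifies $\beta''(t)$ with the negative of the variance of the log-weights weighted by $P_\gamma$, hence $\beta$ is concave in $t$ (strictly, except in degenerate cases). Therefore the unique stationary point of $s\mapsto s\alpha+\beta(s)$ at $s=t$ where $\alpha(t)=\alpha$ realises the infimum, giving $\inf_s\{s\alpha+\beta(s)\}=t\alpha+\beta(t)$.

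Finally I would verify surjectivity of $t\mapsto\alpha(t)$ onto $(\alpha_{\min},\alpha_{\max})$: since $\alpha(t)=-\beta'(t)$ and $\beta$ is concave, $\alpha$ is monotone in $t$, and its limits as $t\to\pm\infty$ are exactly $\alpha_{\min}$ and $\alpha_{\max}$ (these limits come from the quotients that appear in the definitions of $\alpha_{\min},\alpha_{\max}$, where the dominant term is the max/min of $-\zeta\log p_\gamma(w)-(1-\zeta)\log q_\gamma(w)$ over $w\in\D_\gamma$). Putting the three pieces together produces $H(\alpha)\ge t\alpha+\beta(t)=\inf_s\{s\alpha+\beta(s)\}$ for every $\alpha$ in the claimed range. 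The main technical obstacle is the implicit differentiation step: because $\beta(t)$ is assembled from the pattern-wise $\beta_\gamma(t)$ via the $\zeta\le 1$ branch of \eqref{betat}, one must carefully track the cancellations involving $\log m_\gamma$ and the weights $u_\gamma^{1-\zeta}$ to see that the algebraic $-\beta'(t)$ genuinely matches the probabilistic expression $\alpha(t)$ derived from the measures $P_\gamma$ in \eqref{def_alpha}; this identification is what ties the Frostman calculation in Lemma~\ref{mut} to the Legendre transform on the right-hand side.
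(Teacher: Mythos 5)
Your core argument is exactly the paper's proof: pick $t$ with $\alpha=\alpha(t)$, use Lemma~\ref{lem_ld} to see that $\mu_t$ is carried by $E_{\alpha(t)}$, apply $\hdd E_{\alpha(t)}\geq \hdd\mu_t$ and Lemma~\ref{mut} to get $H(\alpha)\geq t\alpha(t)+\beta(t)$, and conclude. Two remarks. First, the entire Legendre-transform portion of your write-up is superfluous for this direction: once $\alpha(t)=\alpha$, the inequality $t\alpha+\beta(t)\geq\inf_{s}\{s\alpha+\beta(s)\}$ holds trivially because the left side is one value of the function being infimised, so you do not need to show the infimum is \emph{attained} at $s=t$; the only thing genuinely required beyond the paper's chain of inequalities is that $t\mapsto\alpha(t)$ is onto $(\alpha_{\min},\alpha_{\max})$, which the paper itself asserts without proof and which follows from continuity of $\alpha(\cdot)$ and its limits at $\pm\infty$, without any appeal to $\beta'$. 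Second, a caution on the one computation you do carry out: the identity $-\beta_\gamma'(t)\log m_\gamma=\sum_{w\in\D_\gamma}P_\gamma(w)\bigl(-\zeta\log p_\gamma(w)-(1-\zeta)\log q_\gamma(w)\bigr)$ is not an immediate "direct computation," because $u_\gamma(j)=q_\gamma^t(j)/\sum_{(i,j)\in\D_\gamma}p_\gamma^t(i,j)$ depends on $t$ both through the exponent and through the normalising denominator; differentiating produces an extra term $(1-\zeta)\sum_w p_\gamma^t(w)\log p_\gamma(w)/\sum_w p_\gamma^t(w)$ that must be reconciled with the $P_\gamma$-weighted averages before the claimed cancellation to $\zeta\sum_w P_\gamma(w)\log p_\gamma(w)$ goes through. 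Since that identity is only used in the optional part of your argument, it does not invalidate the lower bound, but as written it should not be presented as routine.
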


\begin{proof}
For each $\alpha\in(\alpha_{\min}, \alpha_{\max})$, by \eqref{def_alpha},  there exists $t>0$ such that $\alpha=\alpha(t)$. Let $E'=\{x\in E_{\alpha(t)} : \lcd \mu(x)=\alpha(t)\}$.
By Lemma~\ref{lem_ld}, $\mu_t(E')=1$. Since
$$
\hdd E_{\alpha(t)} \geq \hdd E' \geq \hdd \mu_t,
$$
by Lemma~\ref{mut}, we have that
$$
\hdd \mu_t = t\alpha(t)+\beta(t) \geq \inf_t \{t\alpha+\beta(t)\},
$$
and the conclusion holds.
\end{proof}

\subsection{Upper Bound}\label{sec_ub}
For $\mathbf{w}\in \Sigma^\infty$ we write
\begin{equation}\label{defB}
I_k(\mathbf{w})=\frac{1}{k} \sum_{i=1}^{k}\log u_i(w_i)
,\qquad
D_k(\mathbf{w})=I_l(\mathbf{w})-I_k(\mathbf{w}),
\end{equation}
where $l=l(k)$ is given by~\eqref{def_l}, and $u_i$ is given by~\eqref{eq_uk}.
We define
\begin{eqnarray*}
&&\hspace{-1cm}\partial E=\left\{x\in E \ \colon \exists \mathbf{w}=(i_1, j_1) (i_1,j_2) \ldots \in \Pi^{-1} (x), \exists K>0, j_k=0, \forall k>K\right\}\\
&&\bigcup \left\{x\in E \ \colon \exists \mathbf{w}=(i_1, j_1) (i_1,j_2) \ldots \in \Pi^{-1} (x), \exists K>0, j_k=m_k-1, \forall k>K\right\}
\end{eqnarray*}

To study the multifractals spectrum on $E$, we need the following technique condition.
\begin{defn}\label{defRC}
We say $E$ satisties \textit{ the replica condition} (RC) if for all $\epsilon>0$ and  all $x \in E\backslash\partial E$, for each $\mathbf{w}\in  \Pi^{-1}(x)$,  there exists a sequence $\{c_{k_i}\}_{i=1}^\infty$, such that
 $$c_{k_i} \geq (m_1 m_{2}\ldots m_{k_i})^{-(1+\epsilon/2)}$$
satisfying that
  $$B(x,c_{k_i})\cap E \subset \bigcup\{S_{k_i}(\mathbf{w}')\colon j_1'=j_1, \ldots, j_{k_i}'=j_{k_i}\}  \qquad \textit{and} \qquad   D_{k_i}(\mathbf{w}) > -\epsilon.$$
\end{defn}

Actually, we prove that $RC$ implies the upper bound. For  $\alpha>0$ and $\epsilon>0$, we write
\begin{eqnarray*}
Y(\epsilon, k)&=&\{S\in \mathcal{S}_k: (m_1\ldots m_{k})^{-\alpha(1+\epsilon)} < \mu(S)<(m_1\ldots m_{k})^{-\alpha(1-\epsilon)}\},\\
G(\epsilon,k) &=& Y(\epsilon,k) \cap \{S_k(\mathbf{w}):D_k(\mathbf{w})\geq -\epsilon, \ \mathbf{w}\in\Sigma^\infty\}.
\end{eqnarray*}
We denote the distance from $x$ to a set $A$ by $d(A,x)=\inf\{|x-y| : y\in A\}$.
\begin{lem}\label{lem12}
Let $E$ be the self-affine Moran set with the frequency $\mathbf{f}=\{f_\gamma\}_{\gamma\in\Gamma}$ and $\zeta\leq 1$.
For all $\epsilon>0$ and  $x\in E_{\alpha}$, there exists $K\in\mathbb{N}$ such that for every $k\geq K$ there exists $\mathbf{w}'\in \Sigma^\infty$ such that $S_k(\mathbf{w}')\in Y(\epsilon, k)$ satisfying
$$
d(S_k(\mathbf{w}'),x)\leq (m_1 m_{2}\ldots m_{k})^{-1}.
$$
\end{lem}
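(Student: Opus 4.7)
The plan is to cover a ball around $x$ by the finitely many approximate squares of level $k$ that meet it, and then use the local dimension hypothesis $\lcd \mu(x) = \alpha$ together with a pigeonhole argument to pick out one square whose $\mu$-mass falls in the prescribed window.

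First I would record the geometric fact that at level $k$ each approximate square $S\in \mathcal{S}_k$ sits inside a rectangle $\widetilde{S}_k(\mathbf{w}')$ of height $(m_1 \cdots m_k)^{-1}$ and width $(n_1 \cdots n_l)^{-1}$, where $l=l(k)$. The defining inequality of $l(k)$ combined with $n_l\leq N^+$ yields
$$(m_1 \cdots m_k)^{-1}/N^+ < (n_1 \cdots n_l)^{-1} \leq (m_1 \cdots m_k)^{-1},$$
so every such rectangle has both sides comparable to $r := (m_1 \cdots m_k)^{-1}$. An elementary counting on this rectangular grid then produces a constant $C_0 = C_0(N^+)$ such that the ball $B(x,r)$ intersects at most $C_0$ distinct members of $\mathcal{S}_k$, and each such member has diameter at most $\sqrt{2}\,r$, hence is contained in $B(x,C_1 r)$ with the absolute constant $C_1 = 1+\sqrt{2}$.

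Since $x\in E_\alpha$ means $\lim_{\rho\to 0}\log\mu(B(x,\rho))/\log \rho = \alpha$, for the given $\epsilon>0$ I would fix $K$ so that for every $k\geq K$,
$$r^{\alpha(1+\epsilon/2)} \leq \mu(B(x,r)) \qquad\text{and}\qquad \mu(B(x,C_1 r)) \leq (C_1 r)^{\alpha(1-\epsilon/2)}.$$
Let $\mathcal{S}'\subset \mathcal{S}_k$ be the collection of approximate squares meeting $B(x,r)$. Since $\operatorname{card}\mathcal{S}'\leq C_0$ and these squares cover $B(x,r)\cap E$, pigeonhole produces some $S = S_k(\mathbf{w}')\in \mathcal{S}'$ with $\mu(S)\geq C_0^{-1}\mu(B(x,r))\geq C_0^{-1}r^{\alpha(1+\epsilon/2)}$. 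Combining with the inclusion $S\subset B(x,C_1 r)$ gives
$$C_0^{-1}\,r^{\alpha(1+\epsilon/2)} \leq \mu(S) \leq C_1^{\alpha(1-\epsilon/2)}\, r^{\alpha(1-\epsilon/2)}.$$

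To finish, since $r = (m_1 \cdots m_k)^{-1} \to 0$, enlarging $K$ if necessary absorbs the constants: $C_0^{-1}$ dominates $r^{\alpha\epsilon/2}$ and $C_1^{\alpha(1-\epsilon/2)}$ is dominated by $r^{-\alpha\epsilon/2}$ for all $k\geq K$, which upgrades the two bounds to $(m_1\cdots m_k)^{-\alpha(1+\epsilon)} < \mu(S) < (m_1\cdots m_k)^{-\alpha(1-\epsilon)}$, i.e.\ $S\in Y(\epsilon,k)$. Since $S$ meets $B(x,r)$, we automatically obtain $d(S,x)\leq r = (m_1\cdots m_k)^{-1}$. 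The only delicate step is the geometric counting estimate $\operatorname{card}\mathcal{S}'\leq C_0$, which crucially uses $N^+<\infty$ and the precise definition of $l(k)$ to make both sides of every rectangle comparable to $r$; everything else is a soft absorption of constants into the $\epsilon$-window by letting $k$ grow.
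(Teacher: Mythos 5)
Your proposal is correct and follows essentially the same route as the paper's own proof: cover a ball of radius $(m_1\cdots m_k)^{-1}$ around $x$ by the boundedly many (in terms of $N^+$) level-$k$ approximate squares meeting it, pigeonhole using the lower local-dimension bound to find one of large measure, bound each such square's measure from above via its containment in a comparably sized ball, and absorb the multiplicative constants into the $\epsilon$-window for large $k$. The only differences are cosmetic (your constants $C_0$, $1+\sqrt{2}$ and the $\epsilon/2$ split versus the paper's $6N^+$, $2C_1$ and $\epsilon/3$ split).
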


\begin{proof}
  Fix $\epsilon>0$. Since  $x\in E_{\alpha}$, there exists a real $R>0$ such that for all $r<R$,  we have that
  $$
  \alpha\left(1-\frac{\epsilon}{3}\right) \leq \frac{\log \mu(B(x,r))}{\log r} \leq \alpha\left(1+\frac{\epsilon}{3}\right).
  $$
Let $K>0$ be a sufficiently large  integer such that
$$\frac{1}{m_1 m_{2}\ldots m_{K}} < \frac{R}{2C_1} ,  \quad \frac{(2C_1)^{\alpha(1-\frac{\epsilon}{3})}}{  (m_1\ldots m_K)^{\frac{2\alpha\epsilon}{3}}}<1 \quad \textit{and } \quad \frac{(m_1\ldots m_K)^{2\alpha\epsilon/3}}{6N^+}>1,
$$
where $C_1=(1+(N^+)^2)^{1/2}$. For all $k\geq K$ and all $\mathbf{w}'$ such that $d(S_k(\mathbf{w}'),x)\leq (m_1 m_{2}\ldots m_{k})^{-1}$ we have that
\begin{equation}\label{musu}
  \mu(S_k(\mathbf{w}'))\leq (2C_1(m_1 m_{2}\ldots m_{k})^{-1})^{\alpha(1-\frac{\epsilon}{3})}\leq (m_1 m_{2}\ldots m_{k})^{-\alpha(1-\epsilon)}.
\end{equation}

On the other hand, there are at most $6N^+$ approximate squares $S_k(\mathbf{w'})$ such that $d(S_k(\mathbf{w}'),x)\leq (m_1 m_{2}\ldots m_{k})^{-1}$. It follows that at least one of these  approximate squares must satisfies
  $$
  \mu(S_k(\mathbf{w}'))\geq \frac{1}{6N^+}(m_1 m_{2}\ldots m_{k})^{-\alpha(1+\frac{\epsilon}{3})}\geq (m_1 m_{2}\ldots m_{k})^{-\alpha(1+\epsilon)}.
  $$
Therefore $S_k(\mathbf{w}')\in Y(\epsilon, k)$, and  the conclusion holds.
\end{proof}

\begin{lem}\label{cover}
Let $E$ be the self-affine Moran set with the frequency $\mathbf{f}=\{f_\gamma\}_{\gamma\in\Gamma}$ and $\zeta\leq 1$.
Suppose $E$ satisfies the replica condition. Then for all  $\epsilon>0$ and all  $x\in E_{\alpha}$, there exists a sequence $\{k_i\}$ and  \{$\mathbf{w}'_i\in \Sigma^\infty\}$ such that
\begin{itemize}
  \item[(1)] $d(S_{k_i}(\mathbf{w}'_i),x)\leq (m_1 m_{2}\ldots m_{k_i})^{-1}$,
  \item[(2)] $S_{k_i}(\mathbf{w}'_i)\in Y(\epsilon, k_i)$,
  \item[(3)] $D_{k_i}(\mathbf{w}'_i) \geq -\epsilon$, for each  $\mathbf{w}\in \Pi^{-1} (x)$.
  \item[(4)] if $x \notin \partial E$ then $j_1'=j_1, \ldots, j'_{k_i}=j_{k_i}$ for each  $\mathbf{w}\in \Pi^{-1} (x)$.
\end{itemize}
\end{lem}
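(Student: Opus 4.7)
The plan is to combine the Replica Condition with a pigeonhole refinement of Lemma~\ref{lem12}, arranging that the approximate square selected at each scale lies in the correct horizontal strip of $\mathbf{w}$. Throughout I set $r_k:=(m_1m_2\ldots m_k)^{-1}$ for brevity.

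First I would handle the main case $x\in E\setminus\partial E$. Fixing $\mathbf{w}\in\Pi^{-1}(x)$, the Replica Condition produces a sequence $\{k_i\}$ and radii $\{c_{k_i}\}$ with $c_{k_i}\geq r_{k_i}^{1+\epsilon/2}$ for which
$$B(x,c_{k_i})\cap E\subset\bigcup\{S_{k_i}(\mathbf{w}')\colon j'_1=j_1,\ldots,j'_{k_i}=j_{k_i}\},\qquad D_{k_i}(\mathbf{w})>-\epsilon.$$
Replacing $c_{k_i}$ by $\min\{c_{k_i},r_{k_i}\}$ (the lower bound is preserved for large $i$ since $r_{k_i}^{\epsilon/2}\to 0$), I assume $c_{k_i}\leq r_{k_i}$. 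Since each approximate square at level $k_i$ has linear dimensions $\Theta(r_{k_i})$, only a uniformly bounded number $C_2$ of such squares can meet $B(x,c_{k_i})$, and the RC inclusion forces each such square to be a strip square (same first $k_i$ $j$-entries as $\mathbf{w}$).

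The heart of the argument is a pigeonhole using $x\in E_\alpha$: for $i$ large,
$$\mu(B(x,c_{k_i}))\geq c_{k_i}^{\alpha(1+\epsilon/3)}\geq r_{k_i}^{\alpha(1+\epsilon/2)(1+\epsilon/3)}.$$
Dividing among the $\leq C_2$ strip squares meeting $B(x,c_{k_i})$, one obtains some $S_{k_i}(\mathbf{w}'_i)$ with
$$\mu(S_{k_i}(\mathbf{w}'_i))\geq C_2^{-1}\,r_{k_i}^{\alpha(1+\epsilon/2)(1+\epsilon/3)}\geq r_{k_i}^{\alpha(1+\epsilon)},$$
the final inequality holding for $i$ large because $(1+\epsilon/2)(1+\epsilon/3)<1+\epsilon$ for small $\epsilon$ and $r_{k_i}\to 0$. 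The matching upper bound $\mu(S_{k_i}(\mathbf{w}'_i))\leq r_{k_i}^{\alpha(1-\epsilon)}$ is exactly the computation in Lemma~\ref{lem12} because $d(S_{k_i}(\mathbf{w}'_i),x)\leq c_{k_i}\leq r_{k_i}$. This delivers (1), (2), and (4); property (3) is a free consequence of (4) since $u_h(w)=u_h(j)$ depends only on the $j$-coordinate, so matching $j$-entries yield $I_{k_i}(\mathbf{w}'_i)=I_{k_i}(\mathbf{w})$ and $I_{l(k_i)}(\mathbf{w}'_i)=I_{l(k_i)}(\mathbf{w})$, hence $D_{k_i}(\mathbf{w}'_i)=D_{k_i}(\mathbf{w})>-\epsilon$.

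For $x\in\partial E$ property (4) is not required, and I would apply Lemma~\ref{lem12} directly along any $k_i\to\infty$ to secure (1) and (2). Property (3) is then handled by noting that for $\mathbf{w}$ eventually constant in its $j$-entries the value $u_h(j_h)$ depends only on the pattern $\gamma$ of level $h$, so $D_k(\mathbf{w})\to 0$; a candidate $\mathbf{w}'_i$ from Lemma~\ref{lem12} can be chosen with matching $j$-tail and inherits this bound. The main obstacle I anticipate is the pigeonhole step: if $c_{k_i}$ were allowed to be arbitrarily small relative to $r_{k_i}$, the lower bound on $\mu(S_{k_i}(\mathbf{w}'_i))$ would collapse and the selected strip square would fall out of $Y(\epsilon,k_i)$. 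It is precisely the quantitative lower bound $c_{k_i}\geq r_{k_i}^{1+\epsilon/2}$ built into the Replica Condition, combined with the $E_\alpha$-regularity, that keeps the exponent product $(1+\epsilon/2)(1+\epsilon/3)$ below $1+\epsilon$, and this threshold is the delicate quantitative point driving the entire argument.
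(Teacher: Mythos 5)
Your proposal is correct and follows essentially the same route as the paper: Lemma~\ref{lem12} supplies (1) and the measure upper bound, the replica condition supplies the strip inclusion and the lower bound $\mu(B(x,c_{k_i}))\geq (m_1\cdots m_{k_i})^{-\alpha(1+\epsilon/2)(1+\epsilon/3)}$, a pigeonhole over the boundedly many strip squares meeting the ball yields membership in $Y(\epsilon,k_i)$, the identity $D_{k_i}(\mathbf{w}'_i)=D_{k_i}(\mathbf{w})$ (since $u_h$ depends only on the $j$-entries) gives (3), and the boundary case is handled separately via $D_k(\mathbf{w})\to 0$. The only difference is cosmetic (your explicit truncation $c_{k_i}\mapsto\min\{c_{k_i},r_{k_i}\}$ and the constant $C_2$ in place of the paper's $N^+$), so no further comparison is needed.
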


\begin{proof}
Fix $\epsilon >0$ and $x\in E_\alpha$.  For each $\mathbf{w}\in \Pi^{-1} (x)$, by Lemma~\ref{lem12}, the $(1)$ and $(2)$ hold for all sufficiently large $k$.

For $x \in \partial E$, , we have that
$$
\lim_{k\to\infty} I_k(\mathbf{w})=\sum_{\gamma\in \Gamma}f_\gamma u_{\gamma}(0) \quad \text{or} \quad  \lim_{k\to\infty} I_k(\mathbf{w})=\sum_{\gamma \in \Gamma} f_\gamma u_\gamma(m_\gamma-1).
$$
Thus $\lim_{k\to\infty} D_k(\mathbf{w})=0$, and there exists $K>0$ such that $D_k(\mathbf{w})>-\epsilon$ for $k>K$. Hence the conclusion holds for  $x \in \partial E$.

If $x \notin \partial E$, by replica condition, for each integer $i>0$, the ball $B(x,c_{k_i})$ is contained in the union of at most $N^+$ approximate squares $S_{k_i}(\mathbf{w}')$  satisfying $j'_1=j_1, \ldots, j'_{k_i}=j_{k_i}$, and $D_{k_i}(\mathbf{w}') =D_{k_i}(\mathbf{w}) \geq -\epsilon$. Moreover
$$
\mu(B(x,c_{k_i})) \geq c_{k_i}^{\alpha(1+\frac{\epsilon}{3})} \geq (m_1 m_{2}\ldots m_{k_i})^{-\alpha\left(1+\frac{\epsilon}{2}\right) \left(1+\frac{\epsilon}{3}\right)}.
$$
This implies that the measure on at least one of the approximate squares is no less than $(m_1 m_{2}\ldots m_{k})^{-\alpha(1+\epsilon)}$, and we write a such square as $S_{k_i}(\mathbf{w}'_i)$ with $\mu(S_{k_i}(\mathbf{w}'_i))\geq (m_1\ldots m_{k})^{-\alpha(1+\epsilon)}$.  Together with ~\eqref{musu}, we have that  $S_{k_i}(\mathbf{w}'_i)\in Y(\epsilon, k)$.
\end{proof}

Given $\mathbf{w}\in \Sigma^\infty$, for integers $0<l<k$,we write
$$\Sigma_l^k(\mathbf{w})=\{\mathbf{w}'\in\Sigma^k \colon w_i'=w_i , i=1,2,\ldots, l; j'_i=j_i,i=l+1,\ldots,k\}.$$

\begin{lem}\label{lem_estmu}
For each $\epsilon>0$, there exists an integer $K>0$ such that, for all sufficiently large integer $k$, we have that
\begin{equation*}\label{eqmu1}
  \mu(S_k(\mathbf{w}))^t \leq \left\{\begin{array}{ll}
  e^{2\epsilon k}\sum_{\mathbf{w}'\in \Sigma_l^k(\mathbf{w})}\Big(p_1(w'_1)\ldots p_k(w'_k)\Big)^t \Big(u_1(w'_1)\ldots u_k(w'_k)\Big)^{1-\zeta}, & \zeta <1;\\
 e^{\epsilon k}\Big(p_1(w_1)\ldots p_k(w_k)\Big)^t , & \zeta =1,
 \end{array}\right.
\end{equation*}
 for all $S_k(\mathbf{w})\in G(\epsilon,k)$.
\end{lem}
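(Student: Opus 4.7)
The plan is to split according to whether $\zeta<1$ or $\zeta=1$, since these require different routes, and to exploit the inclusion $S_k(\mathbf{w})\in G(\epsilon,k)$ only in the first case.

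For $\zeta<1$, by \eqref{lim_kl} we have $l(k)/k\to\zeta<1$, so $l<k$ for all sufficiently large $k$ and \eqref{muas} gives $\mu(S_k(\mathbf{w}))^t=\prod_{i=1}^{l}p_i(w_i)^t\prod_{i=l+1}^{k}q_i(j_i)^t$. First I would evaluate the sum on the right-hand side of the claim explicitly: every $\mathbf{w}'\in\Sigma_l^k(\mathbf{w})$ has $j_i'=j_i$ for all $i\le k$, so the factor $\prod_{i=1}^k u_i(w_i')^{1-\zeta}=\prod_{i=1}^k u_i(j_i)^{1-\zeta}$ is the same constant for every $\mathbf{w}'$. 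Factoring this out and using the identity $\sum_{(i',j_i)\in\D_i}p_i(i',j_i)^t=q_i(j_i)^t/u_i(j_i)$ that comes straight from the definition \eqref{eq_uk} of $u_i$, the sum collapses to
$$
\mu(S_k(\mathbf{w}))^t\cdot\prod_{i=1}^{l}u_i(j_i)^{1-\zeta}\prod_{i=l+1}^{k}u_i(j_i)^{-\zeta}.
$$
Hence the claim for $\zeta<1$ reduces to showing the correction product above is at least $e^{-2\epsilon k}$. Taking logarithms and using the notation \eqref{defB}, this correction factor equals $\exp\bigl(lI_l(\mathbf{w})-\zeta kI_k(\mathbf{w})\bigr)$, which I rewrite as $\exp\bigl(\zeta kD_k(\mathbf{w})+(l-\zeta k)I_l(\mathbf{w})\bigr)$. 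The first summand is at least $-\zeta k\epsilon\geq -\epsilon k$ because $S_k(\mathbf{w})\in G(\epsilon,k)$ forces $D_k(\mathbf{w})\geq -\epsilon$. For the second summand, $\Gamma$ being finite forces $|\log u_i(j)|$ to be uniformly bounded by some constant $C$, so $|I_l(\mathbf{w})|\leq C$; combined with $|l-\zeta k|/k\to 0$ from \eqref{lim_kl}, for all sufficiently large $k$ we obtain $|(l-\zeta k)I_l(\mathbf{w})|\leq \epsilon k$, and the two pieces add to give the required $-2\epsilon k$ lower bound.

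For $\zeta=1$, direct comparison suffices and $G(\epsilon,k)$ is not needed. If $l\leq k$, rewrite
$$
\mu(S_k(\mathbf{w}))^t=\Bigl(\prod_{i=1}^{k}p_i(w_i)^t\Bigr)\cdot\prod_{i=l+1}^{k}\bigl(q_i(j_i)/p_i(w_i)\bigr)^t,
$$
and bound each factor in the second product by $p_{\min}^{-t}$, where $p_{\min}=\min_{\gamma\in\Gamma}\min_{w\in\D_\gamma}p_\gamma(w)>0$ exists by finiteness of $\Gamma$. Since $(k-l)/k\to 0$ when $\zeta=1$, for large $k$ this correction is at most $e^{\epsilon k}$. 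If instead $l>k$, then $\mu(S_k(\mathbf{w}))^t=\prod_{i=1}^{k}p_i(w_i)^t\prod_{i=k+1}^{l}\widehat{q}_i(i_i)^t\leq\prod_{i=1}^{k}p_i(w_i)^t$ since each $\widehat{q}_i\leq 1$, which is trivially below the claimed bound.

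The main (though still routine) obstacle is the bookkeeping in the $\zeta<1$ case: first obtaining the algebraic identity that turns the sum over $\Sigma_l^k(\mathbf{w})$ into $\mu(S_k(\mathbf{w}))^t$ multiplied by a clean product of $u_i(j_i)$ factors, and then bounding that product by balancing the $D_k$-defect against the gap $l-\zeta k$. The hypothesis $S_k(\mathbf{w})\in G(\epsilon,k)$ is used exactly once, and crucially, in the step $\zeta kD_k(\mathbf{w})\geq -\epsilon k$.
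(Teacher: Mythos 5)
Your $\zeta<1$ argument is correct and is essentially the paper's own proof run in the opposite direction: the paper expands $\mu(S_k(\mathbf{w}))^t$ into the sum over $\Sigma_l^k(\mathbf{w})$ and bounds the ratio $u_{l+1}(w'_{l+1})\cdots u_k(w'_k)\big/(u_1(w'_1)\cdots u_k(w'_k))^{1-\zeta}=e^{\zeta kI_k(\mathbf{w})-lI_l(\mathbf{w})}$ above by $e^{2\epsilon k}$, using exactly your two ingredients --- $D_k(\mathbf{w})\geq-\epsilon$ from membership in $G(\epsilon,k)$, and $|l-\zeta k|=o(k)$ together with the uniform bound on $|I_k|$ coming from finiteness of $\Gamma$. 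You instead divide the right-hand side by the left and bound the quotient below by $e^{-2\epsilon k}$; the identity and the estimates are the same, so this half is fine.

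The $\zeta=1$ case, however, contains a step that fails for $t<0$, and negative $t$ is genuinely needed (Theorem~\ref{thmub} fixes an arbitrary real $t$ and the spectrum is an infimum over all $t$). In the subcase $l>k$ you conclude $\prod_{i=k+1}^{l}\widehat{q}_i^{\,t}\leq 1$ ``since each $\widehat{q}_i\leq 1$''; for $t<0$ the map $s\mapsto s^t$ is decreasing, so $\widehat{q}_i\leq 1$ gives $\widehat{q}_i^{\,t}\geq 1$ and the inequality points the wrong way. Likewise, in the subcase $l\leq k$ the per-factor bound $(q_i(j_i)/p_i(w_i))^t\leq p_{\min}^{-t}$ is false for $t<0$: if a row contains a single digit then $q_i(j_i)/p_i(w_i)=1$ while $p_{\min}^{-t}=p_{\min}^{|t|}<1$ (there the final conclusion happens to survive because the whole product is $\leq 1\leq e^{\epsilon k}$, but the stated step does not). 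The repair is routine and is in the spirit of what the paper does with its constant $C$: since $\Gamma$ is finite, $p_{\min}\leq\widehat{q}_i\leq 1$ and $1\leq q_i(j_i)/p_i(w_i)\leq p_{\min}^{-1}$, where $p_{\min}=\min_{\gamma\in\Gamma}\min_{w\in\D_\gamma}p_\gamma(w)>0$; hence every correction factor lies in $[\,p_{\min}^{|t|},p_{\min}^{-|t|}\,]$ regardless of the sign of $t$, and the total correction is at most $p_{\min}^{-|t|\,|k-l|}\leq e^{\epsilon k}$ once $k$ is large enough that $|k-l|\leq \epsilon k/(|t|\log p_{\min}^{-1})$, which \eqref{lim_kl} with $\zeta=1$ guarantees. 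With that one-line patch your proof is complete and matches the paper's.
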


\begin{proof}
First, we prove  the inequality for $\zeta<1$.
For each $S_k(\mathbf{w})\in \mathcal{S}_k$, by~\eqref{muas} and~\eqref{eq_uk}, we have that
\begin{eqnarray*}
  &&\mu(S_k(\mathbf{w}))^t
  = (p_1(w_1)\ldots p_l(w_l) q_{l+1}(w_{l+1})\ldots q_k(w_k))^t\\
  &=& \Big(p_1(w_1)\ldots p_l(w_l)\Big)^t u_{l+1}(w_{l+1})\Big(\sum_{w_{l+1}'\in D_{l+1} \atop j'_{l+1}=j_{l+1}}p^t_{l+1}(w'_{l+1})\Big)\ldots u_k(w_k)\Big(\sum_{w_{k}'\in D_{k} \atop j'_k=j_k}p^t_k(w'_k)\Big)\\
  &=& \Big(p_1(w_1)\ldots p_l(w_l)\Big)^t u_{l+1}(w_{l+1})\ldots u_k(w_k)\left(\sum_{\mathbf{w}'\in \Sigma_l^k(\mathbf{w})}\Big(p_{l+1}(w'_{l+1})\ldots p_k(w'_k)\Big)^t\right)\\
  &=&\sum_{\mathbf{w}'\in \Sigma_l^k(\mathbf{w})}\Big(p_1(w'_1)\ldots p_k(w'_k)\Big)^t u_{l+1}(w'_{l+1})\ldots u_k(w'_k).
\end{eqnarray*}
  Since $D_k(\mathbf{w})\geq -\epsilon$, by~\eqref{defB}, we have that for $\mathbf{w}'\in \Gamma_k(\mathbf{w})$,
  \begin{eqnarray*}
    \frac{u_{l+1}(w'_{l+1})\ldots u_k(w'_k)}{(u_1(w'_1)\ldots u_k(w'_k))^{1-\zeta}} &=& \frac{(u_1(w_1)\ldots u_k(w_k))^\zeta}{u_1(w_1)\ldots u_l(w_l)}      \\
    &\leq& e^{\zeta k I_k(\mathbf{w})-l I_l(\mathbf{w})}\\
    &\leq& e^{(\zeta k-l)I_k(\mathbf{w})}\cdot e^{\epsilon l}.
  \end{eqnarray*}
  Since $I_k(\mathbf{w})$ is uniformly bounded by some constant $C$ and $\lim_{k\to \infty}\frac{l}{k}=\zeta<1$, there exists $K$ such that for $k\geq K$ we have that $(\zeta k-l)I_k(\mathbf{w})\leq \epsilon l$.
 Hence
  \begin{eqnarray*}
    \frac{u_{l+1}(w'_{l+1})\ldots u_k(w'_k)}{(u_1(w'_1)\ldots u_k(w'_k))^{1-\zeta}}  &\leq& e^{2\epsilon l}\leq e^{2\epsilon k},
  \end{eqnarray*}
which is equivalent to
$$
 u_{l+1}(w'_{l+1})\ldots u_k(w'_k)\leq e^{2\epsilon k} \big(u_1(w'_1)\ldots u_k(w'_k)\big)^{1-\zeta}.
$$
We apply this into the equation of $\mu(S_k(\mathbf{w}))^t$, and the conclusion follows.

  For $\zeta =1$
  Fix $\mathbf{w}$ and k, we have that
  \begin{equation*}
  \mu(S_k(\mathbf{w}))^t=\left\{
  \begin{array}{lcl}
   p_1^t(w_1)\ldots p_l^t(w_l)q_{l+1}^t(w_{l+1})\ldots q_k^t(w_k),  & \  & l \leq k, \\
   p_1^t(w_1)\ldots p_k^t(w_k)q_{k+1}^t(w_{k+1})\ldots q_l^t(w_l), & \  & l > k.
  \end{array}
  \right.
\end{equation*}
Thus
$$
\mu(S_k(\mathbf{w}))^t \leq C^{|(k-l)t|} (p_1(w_1)\ldots p_k(w_k))^t,
$$
where $
C=\max_\gamma \left\{\max_{(i,j)\in \D_\gamma}\left\{\frac{q_{\gamma }(j)}{p_{\gamma}(i,j)}, 2\right\}\right\}.
$

Since $\zeta=1$, $\lim_{k\to \infty}\frac{l}{k}=1 $.  There exists an integer $K>0$ that for $k\geq K$, we have that
$$|k-l| \leq \frac{\epsilon k}{|t|\log C}.$$

Therefore the conclusion holds.

\end{proof}

\begin{thm}\label{thmub}
Let $E$ be the self-affine Moran set with the frequency $\mathbf{f}=\{f_\gamma\}_{\gamma\in\Gamma}$ and $\zeta\leq 1$.  Suppose that $E$ satisfies the replica condition. Then  for each  $\alpha\in(\alpha_{\mathrm{min}},\alpha_{\mathrm{max}})$ we have that
$$
H(\alpha)\leq \inf_t \{\alpha t+\beta(t)\}.
$$
\end{thm}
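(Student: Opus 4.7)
The plan is to show, for each real $t$ and each $\epsilon>0$, that $\hdd E_\alpha\le \alpha t+\beta(t)+O(\epsilon)$; sending $\epsilon\to 0$ and then taking the infimum over $t$ yields the theorem. The strategy has two ingredients: use Lemma~\ref{cover} to cover $E_\alpha$ by small neighbourhoods of approximate squares drawn from $G(\epsilon,k)$ for arbitrarily large $k$, and bound $\#G(\epsilon,k)$ by combining the measure control built into $Y(\epsilon,k)$ with the sub-multiplicative estimate of Lemma~\ref{lem_estmu}.

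For the covering step, fix $N$ and observe that Lemma~\ref{cover} supplies, for every $x\in E_\alpha$, some $k\ge N$ and $\mathbf{w}'\in\Sigma^\infty$ with $S_k(\mathbf{w}')\in G(\epsilon,k)$ and $d(S_k(\mathbf{w}'),x)\le (m_1\cdots m_k)^{-1}$. Hence the family
$$
\mathcal{C}_N=\bigl\{S\cup\{y\in\R^2:d(S,y)\le (m_1\cdots m_k)^{-1}\}\colon k\ge N,\ S\in G(\epsilon,k)\bigr\}
$$
covers $E_\alpha$, and each member has diameter at most $C(m_1\cdots m_k)^{-1}$ for a universal $C$. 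Boundary points $x\in\partial E$ are automatically accommodated since conditions (1)--(3) of Lemma~\ref{cover} alone place the selected $S_k(\mathbf{w}')$ in $G(\epsilon,k)$. Thus $\mathcal{C}_N$ is a valid $\delta$-cover with $\delta\to 0$ as $N\to\infty$.

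For the counting step, each $S\in G(\epsilon,k)\subseteq Y(\epsilon,k)$ satisfies $\mu(S)^t\ge (m_1\cdots m_k)^{-\alpha t-|t|\alpha\epsilon}$ (handling both signs of $t$ via the two inequalities defining $Y$), so
$$
\#G(\epsilon,k)\le (m_1\cdots m_k)^{\alpha t+|t|\alpha\epsilon}\sum_{S\in G(\epsilon,k)}\mu(S)^t.
$$
By Lemma~\ref{lem_estmu} the right-hand sum is at most $e^{2\epsilon k}\sum_{\mathbf{w}'\in\Sigma^k}\bigl(p_1(w'_1)\cdots p_k(w'_k)\bigr)^t\bigl(u_1(w'_1)\cdots u_k(w'_k)\bigr)^{1-\zeta}$, which factorises as $\prod_{i=1}^k m_i^{\beta_i(t)}$ by the defining identity~\eqref{begat} for $\beta_\gamma(t)$. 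The frequency hypothesis~\eqref{freq} together with the positive limit $k^{-1}\log(m_1\cdots m_k)\to\sum_\gamma f_\gamma\log m_\gamma$ then gives $\prod_i m_i^{\beta_i(t)}\le (m_1\cdots m_k)^{\beta(t)+\epsilon}$ and $e^{2\epsilon k}\le (m_1\cdots m_k)^{O(\epsilon)}$ for $k$ large, so $\#G(\epsilon,k)\le (m_1\cdots m_k)^{\alpha t+\beta(t)+C_1\epsilon}$ with $C_1$ depending only on $|t|$, $\alpha$ and the finite pattern collection $\Gamma$. The $\zeta=1$ case uses the second line of Lemma~\ref{lem_estmu} and proceeds analogously.

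Putting the pieces together, for $s=\alpha t+\beta(t)+(C_1+1)\epsilon$ the Hausdorff $s$-sum over $\mathcal{C}_N$ is bounded by
$$
C^s\sum_{k\ge N}\#G(\epsilon,k)(m_1\cdots m_k)^{-s}\le C^s\sum_{k\ge N}(m_1\cdots m_k)^{-\epsilon}\le C^s\sum_{k\ge N}2^{-\epsilon k},
$$
which vanishes as $N\to\infty$. Hence $\mathcal{H}^s(E_\alpha)=0$, so $\hdd E_\alpha\le s$; letting $\epsilon\to 0$ and taking $\inf_t$ finishes the argument. The main delicacy I expect lies in the counting step: verifying that the double sum $\sum_{S\in G(\epsilon,k)}\sum_{\mathbf{w}'\in\Sigma_l^k(\mathbf{w})}(\cdots)$, where $S=S_k(\mathbf{w})$, can be enlarged without loss to the single sum $\sum_{\mathbf{w}'\in\Sigma^k}(\cdots)$ that produces the clean product $\prod m_i^{\beta_i(t)}$, and then carefully absorbing the several $\epsilon$-order error terms into the final slack in $s$.
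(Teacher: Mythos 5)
Your proposal is correct and follows essentially the same route as the paper's proof: cover $E_\alpha$ by enlarged approximate squares from $G(\epsilon,k)$ via Lemma~\ref{cover}, convert the Hausdorff sum into $\sum_S\mu(S)^t$ using the two-sided measure bounds in $Y(\epsilon,k)$, control that sum by Lemma~\ref{lem_estmu} together with the identity $\sum_{w\in\mathcal{D}_i}p_i^t(w)u_i^{1-\zeta}(w)=m_i^{\beta_i(t)}$ after enlarging the disjoint families $\Sigma_l^k(\mathbf{w})$ to all of $\Sigma^k$, and compare $\prod_i m_i^{\beta_i(t)}$ with $(m_1\cdots m_k)^{\beta(t)}$ through the frequency hypothesis. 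The only difference is cosmetic — you first bound $\#G(\epsilon,k)$ and then multiply by $\mathrm{diam}^s$, while the paper sums $\mathrm{diam}^s$ directly — and the "main delicacy" you flag is handled exactly as you anticipate.
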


\begin{proof}
First, we prove the conclusion for $\zeta <1$. Since $E$ satisfies the replica condition, by Lemma~\ref{cover}, we have that for all integer $K\in \mathbb{N}$ and all real $\epsilon>0$,
$$
 E_{\alpha} \subseteq \bigcup_{k>K}\bigcup_{Q_k(\mathbf{w})\in G(\epsilon,k)} \widehat{S}_k(\mathbf{w}),
$$
where $\widehat{S}_k(\mathbf{w})$ is the rectangle with the same centre of $S_k(\mathbf{w})$ but $N^+$ times greater.

Fix a real $t$,
 by \eqref{freq} and  \eqref{betat} , we have that
\begin{eqnarray*}
\lim_{k\to \infty} \Bigg(\frac{m_1^{\beta_1(t)}\ldots m_k^{\beta_k(t)}}{(m_1\ldots m_k)^{\beta(t)}}\Bigg)^\frac{1}{k}
&=& \lim_{k\to \infty} \Bigg(\frac{\prod_{\gamma\in\Gamma} m_\gamma^{c_\gamma(k)\beta_\gamma(t)}}{\big(\prod_{\gamma\in \Gamma}m_\gamma^{c_\gamma(k)} \big)^{\beta(t)}}\Bigg)^\frac{1}{k} = 1.
\end{eqnarray*}
Arbitrarily choose  $\epsilon>0$,  there exists $K_1\in \mathbb{N}$ such that for $k\geq K_1$,
\begin{equation}\label{eqM}
   \frac{m_1^{\beta_1(t)}\ldots m_k^{\beta_k(t)}}{(m_1\ldots m_k)^{\beta(t)}}\leq  (1+\epsilon)^k.
\end{equation}

Let $K_0=\max\{K_1, K_2\}$, where $K_2$ is given by Lemma~\eqref{lem_estmu}.
By Lemma~\ref{lem_estmu} and \eqref{begat}, we have that for $K>K_0$,
\begin{eqnarray*}
&&\sum_{S_k(\mathbf{w})\in G(\epsilon,k)}  (m_1 m_{2}\ldots m_{k})^{-\beta(t)}\mu(S_k(\mathbf{w}))^t \\
&&\leq \sum_{S_k(\mathbf{w})\in G(\epsilon,k)}  (m_1\ldots m_{k})^{-\beta(t)} e^{2\epsilon k}\sum_{\mathbf{w}'\in \Sigma_l^k(\mathbf{w})}\Big(p_1(w'_1)\ldots p_k(w'_k)\Big)^t \Big(u_1(w'_1)\ldots u_k(w'_k)\Big)^{1-\zeta}\\
&&\leq e^{2\epsilon k}  \frac{m_1^{\beta_1(t)}\ldots m_k^{\beta_k(t)}}{(m_1\ldots m_k)^{\beta(t)}}\sum_{S_k(\mathbf{w})\in G(\epsilon,k)}  \sum_{\mathbf{w}'\in \Sigma_l^k(\mathbf{w})} \prod_{i=1}^k\Big(m_i^{-\beta_i(t)}p_i(w'_i)^t u_i(w'_i)^{1-\zeta}\Big)\\
&&\leq e^{2\epsilon k}(1+\epsilon)^k   \sum_{\mathbf{w}'\in \Sigma^k} \prod_{i=1}^k\Big(m_i^{-\beta_i(t)}p_i^t(w'_i) u_i^{1-\zeta}(w'_i)\Big)\\
&&\leq e^{2\epsilon k}(1+\epsilon)^k    \prod_{i=1}^k\sum_{w_i'\in \mathcal{D}_i}\Big(m_i^{-\beta_i(t)}p_i^t(w'_i) u_i^{1-\zeta}(w'_i)\Big)\\
&&\leq  e^{2\epsilon k}(1+\epsilon)^k\\
&&\leq  e^{3\epsilon k}.
\end{eqnarray*}
Let $r_K=3C_1(m_1m_2\ldots m_K)^{-1}$, for integer $K>0$. For all $\delta>\epsilon(\alpha|t|+5)$,  since $m_k\geq 2$, for $K\geq K_0$, we have that
\begin{eqnarray*}
&&\hspace{-2cm}\mathcal{H}_{r_K}^{t\alpha+\beta(t)+\delta}( E_{\alpha}) \leq \sum_{k\geq K} \sum_{S_k(\mathbf{w})\in G(\epsilon,k)} |\widehat{S}_k(\mathbf{w})|^{t\alpha+\beta(t)+\delta}\\
&\leq& (N^+C_1)^{t\alpha+\beta(t)+\delta}\sum_{k\geq K} \sum_{S_k(\mathbf{w})\in G(\epsilon,k)}  (m_1 m_{2}\ldots m_{k})^{-(\beta(t)+5\epsilon)}\mu(S_k(\mathbf{w}))^t\\
&\leq& C_2 \sum_{k\geq K_0} 2^{-5\epsilon k} e^{3\epsilon k} \\
&<& \infty,
\end{eqnarray*}
where $C_1=(1+(N^+)^2)^\frac{1}{2}$, and $C_2=(N^+C_1)^{t\alpha+\beta(t)+\delta}$. This implies that
$$
\hdd  E_{\alpha} \leq t\alpha+\beta(t)+\delta.
$$
Since $\epsilon$ is arbitrarily chosen, $\delta$ can be arbitrarily small, we have that
$$
\hdd  E_{\alpha}\leq t\alpha+\beta(t).
$$
for all $t$.

For the case  $\zeta=1$, the proof is almost identical, and we omit the proof. Therefore the conclusion holds.

\end{proof}

By Theorem~\ref{thmlb} and Theorem~\ref{thmub}, we immediately have the following conclusion.
\begin{thm}\label{thm1}
Let $E$ be the self-affine Moran set with the frequency $\mathbf{f}=\{f_\gamma\}_{\gamma\in\Gamma}$ and $\zeta\leq 1$. Suppose that $E$ satisfies  the replica condition.  Then for every $\alpha\in (\alpha_{\mathrm{min}},\alpha_{\mathrm{max}})$, we have that
$$
H(\alpha)=\inf_t \{\alpha t+\beta(t)\}.
$$
Furthermore, $H(\alpha)$ is differentiable with respect to $\alpha$ and is concave.
\end{thm}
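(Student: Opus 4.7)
The equality $H(\alpha) = \inf_t\{\alpha t + \beta(t)\}$ is immediate from the two preceding results: Theorem~\ref{thmlb} provides the lower bound $H(\alpha) \geq \inf_t\{\alpha t + \beta(t)\}$ for all $\alpha\in(\alpha_{\min},\alpha_{\max})$, while Theorem~\ref{thmub} yields the matching upper bound under the replica condition. Hence only concavity and differentiability of $H$ remain to be established.

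Concavity is automatic. For each fixed $t$ the map $\alpha\mapsto \alpha t + \beta(t)$ is affine in $\alpha$, and any pointwise infimum of a family of affine functions is concave on its domain; in particular $H$ is continuous on $(\alpha_{\min},\alpha_{\max})$.

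For differentiability, my plan is to show that $\beta(t)$ is strictly convex on $\mathbb{R}$ and then invoke Legendre--Fenchel duality. Concretely, I would recast~\eqref{begat} as a pressure-type identity: with $\Lambda_\gamma(t) = \log\sum_{(i,j)\in\mathcal{D}_\gamma} p_\gamma^t(i,j)\,u_\gamma^{1-\zeta}(j)$ one has $\beta_\gamma(t)\log m_\gamma = \Lambda_\gamma(t)$. Differentiating twice in $t$ and carefully accounting for the $t$-dependence of $u_\gamma(j)$ through~\eqref{eq_uk} leads to an expression of the form $\beta_\gamma''(t)\log m_\gamma = \mathrm{Var}_{\nu_t}(\log w_\gamma)$, where $\nu_t$ is the Gibbs-type probability vector proportional to $p_\gamma^t u_\gamma^{1-\zeta}$ on $\mathcal{D}_\gamma$ and $\log w_\gamma$ is the associated log-weight. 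This variance is strictly positive except in the degenerate case where the weights are constant, which corresponds precisely to the endpoints of $(\alpha_{\min},\alpha_{\max})$. Since $\beta(t)$ is a positive linear combination of the $\beta_\gamma(t)$ by~\eqref{betat}, it inherits strict convexity, so by~\eqref{def_alpha} the map $t\mapsto -\beta'(t) = \alpha(t)$ is a smooth bijection of $\mathbb{R}$ onto $(\alpha_{\min},\alpha_{\max})$. Standard Legendre--Fenchel duality then gives $H'(\alpha) = t(\alpha)$, continuous in $\alpha$, which is exactly differentiability.

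The main obstacle I anticipate is the dependence of $u_\gamma(j)$ on $t$ through \emph{both} the numerator $q_\gamma^t(j)$ and the denominator $\sum p_\gamma^t(i,j)$ in~\eqref{eq_uk}, which makes the naive calculation of $\Lambda_\gamma''(t)$ messier than in the classical self-similar pressure. A clean way around this is to treat $u_\gamma^{1-\zeta}(j)$ as a $t$-dependent weight attached only to rows and to reorganise the sum column-by-column, reducing the strict-convexity claim to a two-step Cauchy--Schwarz estimate at each level of the construction. The case $\zeta = 1$ collapses $u_\gamma(j)^{1-\zeta}$ to $1$, so the analysis becomes the classical self-similar one and can be handled as a simpler subcase.
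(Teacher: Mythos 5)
Your handling of the equality is precisely the paper's proof: Theorem~\ref{thm1} is obtained there in one line by combining Theorem~\ref{thmlb} (lower bound) with Theorem~\ref{thmub} (upper bound under the replica condition), so on the main claim you and the paper coincide. The difference is that the paper offers no argument whatsoever for the concavity and differentiability assertions, whereas you sketch one; your route is the standard one and it does go through. Concavity as an infimum of affine functions is immediate. For differentiability, the computation you flag as the obstacle works out more cleanly than you fear: differentiating \eqref{begat} implicitly, the contribution of the $t$-dependence of $u_\gamma$ is
$(1-\zeta)\sum_{(i,j)}P_\gamma(i,j)\bigl(\log q_\gamma(j)-\sum_{i'}p_\gamma^t(i',j)\log p_\gamma(i',j)/\sum_{i'}p_\gamma^t(i',j)\bigr)$,
and since $\sum_i P_\gamma(i,j)=Q_\gamma(j)$ while $P_\gamma(i,j)/Q_\gamma(j)=p_\gamma^t(i,j)/\sum_{i'}p_\gamma^t(i',j)$, the conditional expectation collapses and one gets exactly $\beta_\gamma'(t)\log m_\gamma=\sum_{w}P_\gamma(w)\bigl(\zeta\log p_\gamma(w)+(1-\zeta)\log q_\gamma(w)\bigr)$, hence $-\beta'(t)=\alpha(t)$ by \eqref{betat} and \eqref{def_alpha}. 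For strict convexity it is convenient to note $\sum_{(i,j)}p_\gamma^t(i,j)u_\gamma^{1-\zeta}(j)=\sum_j q_\gamma(j)^{t(1-\zeta)}\bigl(\sum_i p_\gamma(i,j)^t\bigr)^{\zeta}$, a sum of log-convex functions of $t$, which is strictly log-convex unless the weights $-\zeta\log p_\gamma-(1-\zeta)\log q_\gamma$ are constant; nonconstancy for some $\gamma$ with $f_\gamma>0$ is exactly $\alpha_{\min}<\alpha_{\max}$. One must also verify the boundary limits $-\beta'(t)\to\alpha_{\min}$, $\alpha_{\max}$ as $t\to\pm\infty$ so that $t\mapsto\alpha(t)$ is a bijection onto $(\alpha_{\min},\alpha_{\max})$ — a fact the paper itself uses without proof at the start of the proof of Theorem~\ref{thmlb} — after which the Legendre--Fenchel identity $H'(\alpha)=t(\alpha)$ closes the argument as you describe. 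So: same proof as the paper for the spectrum formula, plus a correct completion of the regularity claims the paper leaves unjustified.
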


\begin{proof}[The proof of Theorem~\ref{thm_mfa}]
Suppose that $E$ satisfies  row separation condition. For all $\epsilon >0$ and all $x\in E\backslash \partial E$, arbitrarily choose $\mathbf{w}=(i_1,j_1)(i_2,j_2)\ldots(i_k,j_k)\ldots\in \Pi^{-1}(x)$, and let $c_k=\frac{1}{2}(m_1 m_2\ldots m_k)^{-1}$.  it is clear that
  $$
  c_k \geq \big(m_1 m_2\ldots m_k\big)^{-1-\frac{\epsilon}{2}}.
  $$
 Since row separation condition holds, that is to say, for any two rows at $k$-th level intersecting $E$, the distance between the two rows are at least $2c_k$. Therefore, we have that
  $$B(x,c_k)\cap E \subset \bigcup\{S_k(\mathbf{w}') \colon j_1'=j_1, \ldots, j_k'=j_k\}.
  $$
Note that, for all $\mathbf{w}\in \Pi^{-1}(x)$, we have that
\begin{eqnarray*}
  \limsup_{k\to \infty}D_k(\mathbf{w}) &\geq&  \limsup_{l\to \infty}I_l(\mathbf{w})-\limsup_{k\to \infty}I_k(\mathbf{w})= 0.
\end{eqnarray*}
Hence, there exists a sequence $\{k_i\}$ such that $D_{k_i}(\mathbf{w}) > -\epsilon$, and this implies that  $E$ satisfies the replica condition.
By Theorem~\ref{thm1}, conclusion holds.

Suppose $E$ satisfies top and bottom separation condition. There exists $\gamma \in \Gamma$ with $f_\gamma>0$ such that at least one of the following conditions holds:
\begin{itemize}
  \item[(1).]
 For all $(i,j)\in D_\gamma$, $j \neq 0$.
  \item[(2).]
 For all $(i,j)\in D_\gamma$, $j \neq m_\gamma-1$.
\end{itemize}
This implies that either the top row or the bottom row in pattern $(n_\gamma, m_\gamma, \mathcal{D}_\gamma)$ intersecting $E$ is empty.

For all $\epsilon >0$ and all $x\in E\backslash \partial E$, arbitrarily choose $\mathbf{w}\in \Pi^{-1}(x)$ where $\mathbf{w}=(i_1,j_1)(i_2,j_2)\ldots(i_k,j_k)\ldots$. Let $\xi= \frac{\epsilon\log 2}{2 \log N^+}$, and $c_k=(m_1 \ldots m_{[(1+\xi)k]})^{-1} $. For each given $k>0$, since $m_k\geq 2$ , it follows that
\begin{eqnarray*}
c_k &\geq& (N^+)^{-\xi k}(m_1\ldots m_k)^{-1}\geq 2^{-\frac{\epsilon k}{2}} (m_1\ldots m_k)^{-1}\geq (m_1 m_2\ldots m_k)^{-1-\frac{\epsilon}{2}}.
\end{eqnarray*}
Let $\xi'= \frac{\xi}{4+2\xi}$. Since
$$
\lim_{n\to\infty}\frac{\operatorname{card}\{k': (n_{k'},m_{k'},\mathcal{D}_{k'})=\gamma, {k'}=1,2,\ldots, n \}}{n} = f_\gamma > 0,
$$
there exists $K_\xi>0$ such that for $k>K_\xi$,
$$
(1-\xi')f_\gamma<\frac{\operatorname{card}\{k': (n_{k'},m_{k'},\mathcal{D}_{k'})=\gamma, {k'}=1,2,\ldots, n \}}{n}<(1+\xi')f_\gamma.
$$
This implies that for all $k>K_\xi$,
$$
\operatorname{card}\{k':(n_{k'},m_{k'},\mathcal{D}_{k'})=\gamma \text{ for } k<h<[(1+\xi)k]\}\geq 1.
$$
Let $k_0$ be an integer satisfying $k<k_0<[(1+\xi)k]$ and $\D_{k_0}=\D_\gamma$. Then
$$(m_1 \ldots m_{k_0})^{-1} > (m_1 \ldots m_{[(1+\xi)k]})^{-1} = c_k.$$
Since $\D_{k_0}=\D_\gamma$, either the top row or the bottom row of $\D_{k_0}$ intersecting with $E$ is empty, which implies that
$$
B(x,c_k)\cap E \subset B(x,(m_1 \ldots m_{k_0})^{-1})\cap E \subset \bigcup\{S_k(\mathbf{w}') \colon j_1'=j_1, \ldots, j_k'=j_k\}.
$$
Since $\limsup_{k\to \infty}D_k(\mathbf{w}) \geq 0$, there exists a sequence $\{k_i\}$ such that $D_{k_i}(\mathbf{w}) > -\epsilon$. Hence $E$ satisfies the replica condition.
By Theorem~\ref{thm1}, the conclusion holds.

\end{proof}

\begin{proof}[The proof of Theorem~\ref{cor2}]

By Theorem~\ref{thm1}, it is sufficient to prove that $E$ satisfies the replica condition.

Fix $\epsilon>0$ and $x \in E\backslash\partial E$, and choose $\mathbf{w}\in  \Pi^{-1}(x)$. For $c_k=(N^+)^{-2}\big(m_1 m_2\ldots m_k \big)^{-1}$,  we have that, for all sufficiently large $k$,
 $$c_k \geq (m_1 m_2\ldots m_k)^{-1-\frac{\epsilon}{2}}.$$
It remains to show that there exists a sequence $\{k_i\}$ such that
\begin{equation}\label{eqBcover}
B(x,c_{k_i})\cap E \subset \bigcup\{S_{k_i}(\mathbf{w}') \colon  j_1'=j_1, \ldots, j_{k_i}'=j_{k_i}\},
\end{equation}
and $D_{k_i}(\mathbf{w}) > -\epsilon. $
For each integer $k>0$, we write
$$
V_k(\mathbf{w})=\inf\Big\{k'>k: j_{k'}\notin \{0,m_{k'}-1\}\quad \text{or}\quad \frac{j_{k'+1}}{m_{k'+1}-1}\neq \frac{j_{k'}}{m_{k'}-1} \Big\}-k-1.
$$
Simply to say, $V_k(\mathbf{w})$ is the number of $j_{k'}$ constantly chosen from the bottom or constantly chosen from the top in the patterns.
Since $x\notin\partial E$, for each $k$, $V_k(\mathbf{w})< \infty$. By the assumptions (1) and (2) in Theorem~\ref{cor2}, for $k<k'\leq k+V_k(\mathbf{w})$, we have that $u_{k'}(w_{k'})=u_{k+1}(w_{k+1})$, where $j_{k+1}=0 $ or $m_{k+1}-1$, $w_{k+1}=(i_{k+1},j_{k+1})$.

We prove it  by contradiction. We assume that there exists $K>0$ such that for $k>K$, we have that $V_k(\mathbf{w})>0$ or $D_k(\mathbf{w})<-\epsilon$. Note that $V_k(\mathbf{w})=0$ implies either $j_{k+1}\notin\{0, m_{k+1}\}$ or $j_{k+1}\in \{0, m_{k+1}\}$, $\frac{j_{k+2}}{m_{k+2}-1}\neq \frac{j_{k+1}}{m_{k+1}-1} $. Both cases imply ~\eqref{eqBcover}.

Recall that $l(k)$ is a function of $k$ defined by~\eqref{def_l}. To avoid confusion, we write $l_{k+V_k(\mathbf{w})}$  instead of  $l(k+V_k(\mathbf{w}))$  in the following calculation.

For each $k>0$, write $\zeta_k=\frac{l_{k+V_k(\mathbf{w})}}{k+V_k(\mathbf{w})}$. Since $\lim \frac{l}{k}=\zeta$, it is clear that $\lim_{k\to \infty}\zeta_k=\zeta.$ Hence there exists  $C_0>0$ such that $\frac{1-\zeta_k}{\zeta_k}\leq C_0$ for all $k>0$.

Next  we show that $\frac{V_k(\mathbf{w})}{k}$ is bounded by a constant $C$. Suppose $\frac{V_k(\mathbf{w})}{k}>C_0>\frac{1-\zeta_k}{\zeta_k}$. (Otherwise $\frac{V_k(\mathbf{w})}{k}$ is bounded by a constant $C_0$).  Then $k<l_{k+V_k(\mathbf{w})}$. Since $V_{k+V_k(\mathbf{w})}(\mathbf{w})=0$, we have that $D_{k+V_k(\mathbf{w})}(\mathbf{w})<-\epsilon $ and
\begin{eqnarray*}
&&D_{k+V_k(\mathbf{w})}(\mathbf{w})
= I_{l_{k+V_k(\mathbf{w})}}(\mathbf{w}) -I_{k+V_k(\mathbf{w})}(\mathbf{w})\\
&&= \frac{k I_k(\mathbf{w})+ (l_{k+V_k(\mathbf{w})}-k)\log u_{k+1}(w_{k+1})}{l_{k+V_k(\mathbf{w})}}  -
\frac{k I_k(\mathbf{w})+ V_k(\mathbf{w})\log u_{k+1}(w_{k+1})}{k+V_k(\mathbf{w})}\\
&&=\frac{(1-\zeta_k)k}{\zeta_k(k+ V_k(\mathbf{w}))}(I_k(\mathbf{w})-\log u_{k+1}).
\end{eqnarray*}
Since the  set $\Gamma$ is finite, there exists $C_1$, $C_2$ such that $C_1\leq \log u_k\leq C_2$, and $C_1\leq  I_k(\mathbf{w}) \leq C_2$ for all $k$. Hence
$$
\frac{V_k(\mathbf{w})}{k} \leq \frac{\zeta_k-1}{ \epsilon \cdot \zeta_k}\Big(I_k(\mathbf{w})-\log u_{k+1}\Big) -1 \leq \frac{C_0(C_2-C_1)}{\epsilon} -1.
$$
Taking $C=\max\{C_0,\frac{C_0(C_2-C_1)}{\epsilon} -1\}$,  we have that
\begin{equation}\label{VKK}
\frac{V_k(\mathbf{w})}{k} \leq C,
\end{equation}
for all $k\geq K$.

Choose a large integer $h$ such that $h\cdot \epsilon>C_2-C_1$, and choose $K'>K$ such that $\zeta_k>\frac{\zeta}{2}$ and  $\frac{C_2-C_1}{K'}<\frac{\epsilon}{2}$. Thus for $k>K'$,
\begin{equation}\label{bk}
|I_k(\mathbf{w})-I_{k+1}(\mathbf{w})|=\frac{1}{k}|I_{k+1}(\mathbf{w}) -\log u_{k+1}|<\frac{\epsilon}{2}.
\end{equation}
Let $C'=\frac{2(C+1)}{\zeta}$, where $C$ is the constant in~\eqref{VKK}. Since $V_k<\infty$ for all $k>0$.  we choose $n_0>K'(C')^{2(h+1)}$ such that $V_{n_0}(\mathbf{w})=0$.

We inductively  define a sequence $\{n_j\}_{j=1}^{2h}$ such that $D_{n_j}(\mathbf{w}) < -\epsilon$.
Assume that  $D_{n_j}(\mathbf{w}) < -\epsilon$, which implies that
\begin{equation}\label{Iln}
I_{l(n_j)}< I_{n_j}-\epsilon.
\end{equation}
If $D_{l(n_j)}(\mathbf{w}) < -\epsilon$, by setting $n_{j+1}=l(n_j)$, the inequality $D_{n_{j+1}}(\mathbf{w}) < -\epsilon$ holds.
Otherwise if $D_{l(n_j)}(\mathbf{w}) \geq -\epsilon$, we have that $V_{l(n_j)}(\mathbf{w})>0$. Let
 $$
 \lambda_1=\max\{\lambda<l(n_j):V_{\lambda}(\mathbf{w})=0\}\quad \text{and}\quad \lambda_2=\min\{\lambda>l(n_j):V_{\lambda}(\mathbf{w})=0\}.
 $$
 Then we choose
 $$
 n_{j+1}=\left\{\begin{array}{ll}
 \lambda_1, & \text{for}\ I_{\lambda_1}(\mathbf{w})\leq I_{\lambda_2}(\mathbf{w});\\
 \lambda_2, & \text{for}\ I_{\lambda_1}(\mathbf{w})> I_{\lambda_2}(\mathbf{w}).
 \end{array}\right.
 $$
 Since $I_\lambda(\mathbf{w})$ is monotonic for $\lambda_1+1\leq \lambda\leq \lambda_2$, either $I_{\lambda_1+1}(w)$ or $I_{\lambda_2}(w)$ is not greater than $I_{l(n_j)}(\mathbf{w})$. Suppose $n_{j+1}=\lambda_1$. It is clear that $I_{\lambda_1+1}\leq I_{l(n_j)}$. Therefore, we obtain that
 \begin{eqnarray*}
 I_{n_{j+1}}(\mathbf{w}) &\leq& I_{\lambda_1 +1} +\frac{\epsilon}{2} \qquad \qquad \textit{ by } \eqref{bk}\\
 &\leq& I_{l(n_j)}(\mathbf{w})+\frac{\epsilon}{2}\\
  &\leq& I_{n_j}(\mathbf{w})-\frac{\epsilon}{2}.      \qquad\quad \textit{ by } \eqref{Iln}
 \end{eqnarray*}
 See Figure~\ref{figP} for clear relations of these terms.
\begin{figure}[h]
\centering
\includegraphics[width=0.8\textwidth]{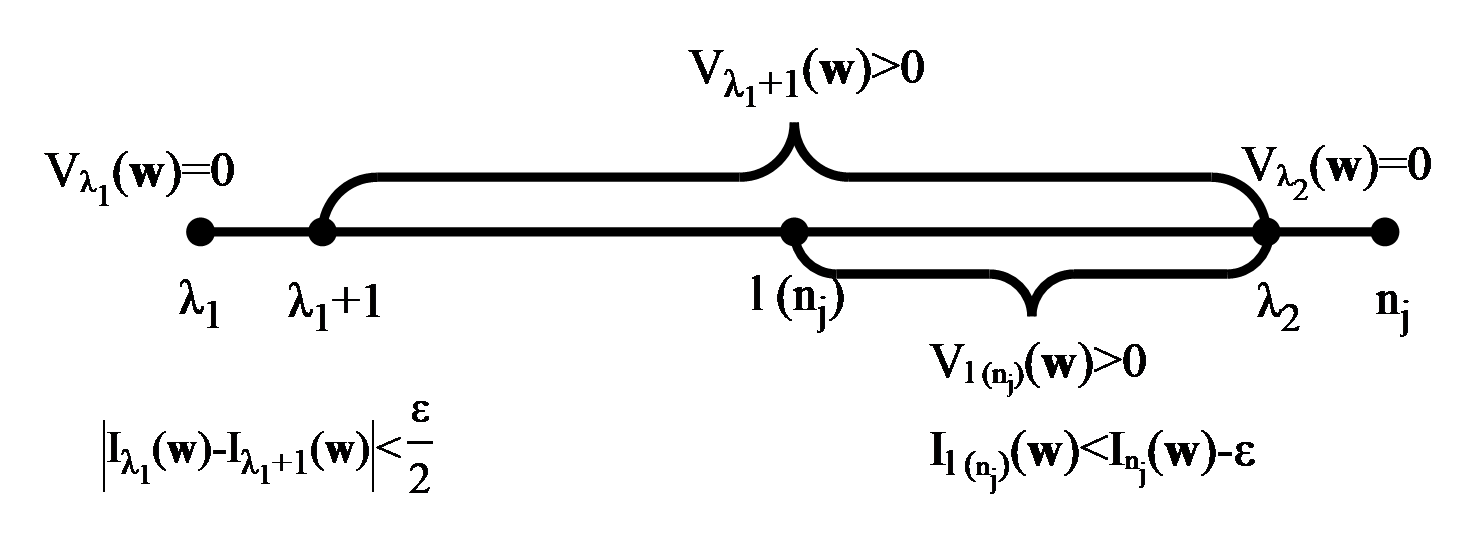}
\caption{}\label{figP}
\end{figure}

Since $l(n_j)\leq n_{j+1}+V_{n_{j+1}}(\mathbf{w})$, by~\eqref{VKK}, we  have that $n_{j+1}\geq\frac{\zeta_{n_j}n_j}{C+1}\geq n_j (C')^{-1}$. Hence  $n_j\geq K'$ for $j=0,1,\ldots,2h$.
Since $I_{n_{j+1}}(\mathbf{w}) \leq I_{n_j}(\mathbf{w})-\frac{\epsilon}{2}$ for each $j$, by the fact $h\cdot \epsilon>C_2-C_1$, we have that
$$
I_{n_{2h}}(\mathbf{w})< I_{n_0}(\mathbf{w})-2h\cdot \frac{\epsilon}{2} \leq C_2-h\epsilon <C_1,
$$
which contradicts to the fact  $C_1\leq I_k(\mathbf{w}) \leq C_2$ for all $k$. Hence $E$ satisfies the replica condition, and by Theorem~\ref{thm1}, the conclusion holds.
\end{proof}


\end{document}